\documentclass[preprint,1p,10pt]{elsarticle}

\journal{Arxiv}

\biboptions{sort&compress,comma,round}
\usepackage{stmaryrd}
\usepackage{graphicx}
\usepackage{amssymb}
\usepackage{bm}

\usepackage{amsmath}
\usepackage{float}
\DeclareSymbolFont{largesymbolsA}{U}{txexa}{m}{n}
\usepackage{setspace}
\usepackage{graphics}
\usepackage{amssymb, latexsym, amsmath, epsfig}
\usepackage{graphicx}
\usepackage{epstopdf}
\usepackage{comment}
\usepackage[normalem]{ulem}
\usepackage{float}
\usepackage{amssymb,amsmath}
\usepackage{amsfonts,amstext,amsthm,amssymb,comment}
\usepackage{epsfig,graphics,color}

\usepackage{lineno}
\usepackage{amsmath}
\usepackage{amsthm}
\usepackage{amssymb}
\usepackage{color}

\usepackage{enumitem}
\usepackage{stmaryrd}
\usepackage{algpseudocode} 
\usepackage{algorithm}
\usepackage{amssymb,amsmath}
\usepackage{textcomp}
\usepackage{stmaryrd}
\modulolinenumbers[5]

\usepackage{float}

\usepackage{graphicx}
\usepackage{epstopdf}
\usepackage{subcaption} 


\newcommand{\bfs}[1]{{\boldsymbol #1}}
\newtheorem{thm}{Theorem}

\newtheorem{ass}{Assumption}
\newtheorem{remark}{Remark}

\newcommand{\jac}[1]{\ensuremath{\det(D #1)}}
\newcommand{\del}[1]{\ensuremath{\llbracket #1 \rrbracket}}









\bibliographystyle{unsrt}
\biboptions{square,sort,compress,comma,numbers}

\def \M{\mathbf{M}}
\def \D{\mathbf{D}}
\def \P{\boldsymbol{\mathcal{M}}}
\def \Ndof{N_{\text{dof}}}
\def \PMP{\P^{-\frac{1}{2}}\M\P^{-\frac{1}{2}}}

\def \Mp{\widehat{\M}}

\def \vett[#1]{\boldsymbol{#1}}

\def \supp{\mathrm{supp}}

\def \Prs{\D^{\frac{1}{2}} \widehat{\D}^{-\frac{1}{2}} \Mp \widehat{\D}^{-\frac{1}{2}} \D^{\frac{1}{2}}}

\def \spann{\mathrm{span}}

\def \NOmega{N_{\mathrm{patch}}}
\def \Npatch{N_{\mathrm{adj}}}
\def \ptca{r}
\def \ptcb{s}
\def \Prsa{\D^{(\ptca)^{\frac{1}{2}}} \widehat{\D}^{(\ptca)^{-\frac{1}{2}}} \Mp^{(\ptca)} \widehat{\D}^{(\ptca)^{-\frac{1}{2}}} \D^{(\ptca)^{\frac{1}{2}}}}

\def \Prad{\P^{-1}_{\text{ad}}}
\def \Bmp{\mathcal{B}_{\text{mp}}}
\def \PadMPad{\P^{-\frac{1}{2}}_{\text{ad}}\M \P^{-\frac{1}{2}}_{\text{ad}}}

\begin{document}
	\begin{frontmatter}
		
		
		\title{Explicit high-order generalized-$\alpha$ methods for isogeometric analysis of structural dynamics}
		
		%
		\author[ad1]{Pouria Behnoudfar\corref{corr}}
		\ead{pouria.behnoudfar@postgrad.curtin.edu.au}
		\cortext[corr]{Corresponding author}
		\author[ad2]{Gabriele Loli}
		\author[ad3]{Alessandro Reali}
		\author[ad2]{Giancarlo Sangalli}
		\author[ad4]{Victor M. Calo}		
		
		\address[ad1]{Curtin Institute for Computation \& School of Earth and Planetary Sciences, Curtin University, Kent Street, Bentley, Perth, WA 6102, Australia}
		\address[ad2]{Dipartimento di Matematica ``F. Casorati", Universit\`{a} di Pavia, 27100 Pavia, Italy}
		\address[ad3]{Department of Civil Engineering and Architecture, Universit\`{a} di Pavia, 27100 Pavia, Italy}
		\address[ad4]{School of Electrical
			Engineering, Computing and Mathematical Sciences, Curtin
			University, P.O. Box U1987, Perth, WA 6845, Australia}
		%
		
		\begin{abstract}
			
			We propose a new family of high-order explicit generalized-$\alpha$ methods for hyperbolic problems with the feature of dissipation control. Our approach delivers $2k,\, \left(k \in \mathbb{N}\right)$ accuracy order in time by solving $k$ matrix systems explicitly and updating the other $2k$ variables at each time-step. The user can control the numerical dissipation in the discrete spectrum's high-frequency regions by adjusting the method's coefficients. We study the method's spectrum behaviour and show that the CFL condition is independent of the accuracy order. The stability region remains invariant while we increase the accuracy order. Next, we exploit efficient preconditioners for the isogeometric matrix to minimize the computational cost. These preconditioners use a diagonal-scaled Kronecker product of univariate parametric mass matrices; they have a robust performance with respect to the spline degree and the mesh size, and their decomposition structure implies that their application is faster than a matrix-vector product involving the fully-assembled mass matrix. Our high-order schemes require simple modifications of the available implementations of the generalized-$\alpha$ method. Finally, we present numerical examples demonstrating the methodology's performance regarding single- and multi-patch IGA discretizations. 
		\end{abstract}
		
		\begin{keyword}
			Generalized-$\alpha$ method, high-order time integrator, Explicit method, dissipation control, CFL condition, hyperbolic, preconditioner, Isogeometric analysis
			
		\end{keyword}
		
	\end{frontmatter}
	
	\section{Introduction}
	
	Chung~and~Hulbert~\cite{chung1993time} introduced the generalized-$\alpha$ method for hyperbolic systems arising in structural dynamics; later, Hulbert~and~Chung~\cite{ HULBERT1996175} presented an explicit version of this time-marching method. The method is second-order accurate in time and provides user-control on the high-frequency numerical dissipations. Likewise, other well-known methods like, e.g., the Newmark-$\beta$~\cite{ newmark1959method} and the HHT-$\alpha$~\cite{ hilber1977improved} methods have both implicit and explicit formulations and are limited to the second-order accuracy. Nevertheless, the Newmark method does not control the dissipation while, using the HHT-$\alpha$, one obtains dissipative solutions in low-frequency regions. The generalized-$\alpha$ method produces an algorithm that combines high-frequency and low-frequency dissipations optimally; see~\cite{ chung1993time, HULBERT1996175}.
	
	These methods, including the generalized-$\alpha$ method, are limited to the second-order accuracy in time. In contrast, high-order methods like Lax-Wendroff, Runge-Kutta, Adams-Moulton, and backward differentiation schemes (see~\cite{ butcher2016numerical}) or high-order IGA collocation methods for dynamics (see, e.g.,~\cite{evans2018explicit})
	lack explicit control over the numerical dissipation of high frequencies. Thus, we propose an explicit $k$-step generalized-$\alpha$ method that delivers $2k$ accuracy in time for second derivatives problems in time. Our method obtains high-order accuracy in the time with optimal control of the resulting system's spectral behaviour. Thus, we build an algorithm that consists of $3k$ equations; for each three-equation set, we solve an explicit system for a variable and update the other two. We then study the resulting amplification matrix's spectral properties to determine the CFL condition and introduce user-defined parameters that control the numerical dissipation. Later, we prove that the CFL condition of our method is independent of temporal accuracy.
	
	The explicit generalized-$\alpha$ method, at each time step,  factors the isogeometric mass matrix; thus, we use a preconditioned conjugate gradients (PCG) as an iterative solver using $\boldsymbol{\mathcal{M}}$~\cite{ loli2020}, which is easy to implement, extremely efficient and robust.  We build the preconditioner using a diagonal scaling of the parametric mass matrix (i.e., matrix associated with the $d$-dimensional cube pre-image of the parametric space) for single-patch geometries. We combine the preconditioners defined above for each patch for multi-patch geometries using an additive Schwarz domain-decomposition method. In~\cite{ loli2020}, the authors proved that these preconditioners are efficient to apply and are robust with respect to the mesh size; in the single-patch case, they show that the behaviour improves as the problem size grows (mesh refinement).
	
	We present numerical simulations to demonstrate the performance of the time-marching scheme and its preconditioner. We provide numerical evidence on the time integrator's high-order accuracy and its dispersion properties. Additionally, we show the optimal convergence in the spatial domain for several problems. The outline for the remainder of the paper is as follows. Section~\ref{sec:2} describes the hyperbolic problem we consider and introduces the spatial discretizations to obtain the matrix formulation of the problem. Section~\ref{sec:3} presents our fourth-order explicit generalized-$\alpha$ method; therein, we also analyze the method's stability, its CFL condition, and its temporal accuracy. Section~\ref{sec:4} generalizes the method to $2k^{th}$-order of accuracy. Then, we introduce our solver in Section~\ref{sec:5} with details on single- and multi-patch isogeometric analysis. We verify the solver's convergence and its computational performance in Section~\ref{sec:6} numerically. Section~\ref{sec:7} describes our contributions and further applications.
	
	\section{Problem Statement} \label{sec:2}
	
	We start with an initial boundary-value hyperbolic problem, a model problem for structural dynamics: 
	\begin{equation} \label{eq:pde}
	\begin{aligned}
	\begin{cases}
	\ddot{u}(\boldsymbol{x},t) - \nabla \cdot(\omega^2\, \nabla u(\boldsymbol{x}, t)) +c\left(\dot{u}(\boldsymbol{x}, t)\right)& = f(\boldsymbol{x},t), \qquad \,\, \,(\boldsymbol{x}, t) \in \Omega \times [0, T], \\
	u(\boldsymbol{x}, t) & = u_D, \qquad \qquad \boldsymbol{x} \in \partial \Omega, \\
	u(\boldsymbol{x}, 0) & = u_0, \qquad \,\qquad \boldsymbol{x} \in \Omega, \\
	\dot{u}(\boldsymbol{x},0) & = v_0, \qquad\,\, \qquad \boldsymbol{x} \in \Omega,
	\end{cases}
	\end{aligned}
	\end{equation}
	Let $\Omega \subset \mathbb{R}^d, d=1,2,3,$ be an open bounded domain. The operator $\nabla$ is the spatial gradient and a superscript dot denotes a time derivative such that $ \dot{u}(\boldsymbol{x},t)=\frac{\partial {u}(\boldsymbol{x},t)}{ \partial t}$ and $ \ddot{u}(\boldsymbol{x},t)=\frac{\partial^2 {u}(\boldsymbol{x},t)}{ \partial t^2}$. $c\left(\dot{u}(\boldsymbol{x}, t)\right)$ models linear damping. The source $f$, propagation speed $\omega$, initial data $u_0, \, v_0$, and Dirichlet boundary conditions $u_D$ are given and assumed regular enough for the problem to admit a weak solution. In order to derive our numerical method for~\eqref{eq:pde}, we first obtain a semi-discretized problem by discretizing in space, then, we deploy our explicit generalized-$\alpha$ method to have a fully discretized system.
	
	\subsection{Spatial discretization}
	
	Adopting a Galerkin method (in particular, isogeometric analysis), the matrix problem resulting from the semi-discretization of~\eqref{eq:pde} reads:
	\begin{equation} \label{eq:mp}
	M \ddot{U} +C\dot{U}+ KU = F,
	\end{equation}
	where $M$, $C$, and $K$ are the mass, damping, and stiffness matrices, respectively. $U$ denotes the vector of the unknowns, and $F$ is the source vector. The initial conditions also read:
	\begin{equation} \label{eq:u0}
	U(0) = U_0, \qquad V(0) = V_0, 
	\end{equation}
	where $U_0$ and $V_0$ represent the given vectors initial conditions corresponding to $u_{0,h}$ and $\dot u_{0,h}$, respectively. In the next section, we propose our numerical technique to deal with the time derivative $\ddot U$ and $\dot U$ in~\eqref{eq:mp} with the accuracy of order $2k$ in the temporal domain with $k\in \mathbb{N}$.
	
	\begin{remark}
		Herein, we propose an explicit generalized-$\alpha$ scheme by considering a general spatial discretization, leading to the matrix problem~\eqref{eq:mp}. Therefore, the use of isogeometric analysis does not limit the method's applicability; effectively, our method applies to the spatial discretization of a time-dependent semi-discretized problem.
	\end{remark}
	\begin{remark}
		In problem~\eqref{eq:pde} and accordingly~\eqref{eq:mp}, for simplicity, we only consider constant $\omega$ and assume that the solution $u(\cdot\,,t)$ satisfies homogeneous boundary condition. One requires slight modifications of the discrete bilinear and linear functions for the cases of heterogeneous propagation speed and non-homogeneous boundary conditions~\cite{hughes2014finite,  hughes2012finite}.
	\end{remark}

	\subsection{Time-discretization}
	
	To obtain a fully discrete problem~\eqref{eq:mp}, we adopt an appropriate time marching scheme to deal with $\ddot u_h$ and $\dot u_h$; in the next section, we propose a new high-order explicit generalized-$\alpha$ method.  
	
	\section {Explicit generalized-$\alpha$ method}\label{sec:3}
	
	Consider a partitioning of the time interval $[0,T]$ as $0 = t_0 < t_1 < \cdots < t_N = T$ with a grid size $\tau_n=t_{n+1}-t_n$. We  approximate $U(t_n),\, \dot U(t_n),\, \ddot U(t_n)$ using $U_n, \,V_n,\, A_n$, respectively. Tthe explicit generalized-$\alpha$ method with second-order accuracy in time solves~\eqref{eq:mp} at time-step $t_{n+1}$,
	\begin{subequations} \label{eq:galpha}
		\begin{align}
		M A_{n+\alpha_m} + CV_n+K U_{n} & = F_{n+\alpha_f},\label{eq:g1} \\
		V_{n+1} & = V_n + \tau A_n + \tau \gamma \del{ A_n},\label{eq:g2} \\
		U_{n+1} & = U_n + \tau V_n + \frac{\tau^2}{2} A_n+ \tau^2 \beta  \del{A_n} \label{eq:g3}, 
		\end{align}
	\end{subequations}
	where
	\begin{equation} \label{eq:mf}
	\begin{aligned}
	F_{n+\alpha_f} & = F(t_{n+\alpha_f}), \\
	A_{n+\alpha_m} & = A_n + \alpha_m \del{A_n}\\ 
	\del{A_n}& = A_{n+1} - A_n.
	\end{aligned}
	\end{equation}
	To retrieve the unknown $A$ at the initial state, we solve
	\begin{equation}\label{eq:ini}
	A_0  = M^{-1} (F_0 - K U_0-CV_0).
	\end{equation}
	At each time step, we first compute $A_{n+1}$ using~\eqref{eq:g1}, and then evaluate $V_{n+1}$ and $U_{n+1}$ from~\eqref{eq:g2} and~\eqref{eq:g3}, respectively. The method~\eqref{eq:galpha} has a truncation error in time of~$\mathcal{O}(\tau^3)$ (for further details, see~\cite{ deng2019high, behnoudfar2019higher}). We extend the accuracy, assuming sufficient temporal regularity of the solution; thus, we use a Taylor expansion with higher-order terms. We introduce new variables $\mathcal{L}^a(A_n)$ to approximate the $a$-th order derivative of $A_n$ in time. Therefore, for example, to derive a fourth-order explicit generalized-$\alpha$ method, we solve
	\begin{equation} \label{eq:4a}
	\begin{aligned}
	MA_{n}^{\alpha_1}+CV_n+KU_{n}&=F_{n+\alpha_{f1}}, \\
	M\mathcal{L}^{3}(A_n)^{\alpha_2}+C\mathcal{L}^{2}(A_n)+K\mathcal{L}^{1}(A_n)&=F^{(3)}_{n+\alpha_{f2}} ,
	\end{aligned}
	\end{equation}
	with updating conditions
	\begin{equation} \label{eq:4aup}
	\begin{aligned}
	U_{n+1}  &= U_n + \tau V_n + \frac{\tau^2}{2} A_n + \frac{\tau^3}{6} \mathcal{L}^{1}(A_n)+ \frac{\tau^4}{24}\mathcal{L}^{2}(A_n)+ \frac{\tau^5}{120} \mathcal{L}^{3}(A_n)+\beta_1 \tau^2 P_{n}, \\
	V_{n+1}  &= V_n + \tau A_n + \frac{\tau^2}{2} \mathcal{L}^{1}(A_n) + \frac{\tau^3}{6} \mathcal{L}^{2}(A_n)+ \frac{\tau^4}{24} \mathcal{L}^{3}(A_n)+\gamma_1 \tau P_{n}, \\[0.2cm]
	\mathcal{L}^{1}(A_{n+1})&= \mathcal{L}^{1}(A_{n}) + \tau \mathcal{L}^{2}(A_n) + \frac{\tau^2}{2}\mathcal{L}^{3}(A_n) + \tau^2 \beta_2  \del {\mathcal{L}^{3}(A_{n})}, \\
	\mathcal{L}^{2}(A_{n+1})&= \mathcal{L}^{2}(A_n) + \tau \mathcal{L}^{3}(A_n) + \tau \gamma_2 \del {\mathcal{L}^{3}(A_{n})} ,
	\end{aligned}
	\end{equation}
	where
	\begin{equation} 
	\begin{aligned}
	P_{n}&=A_{n+1}-A_n-\tau \mathcal{L}^{1}(A_n)-\frac{\tau^2}{2} \mathcal{L}^{2}(A_n)-\frac{\tau^3}{6} \mathcal{L}^{3}(A_n),\\
	A_{n}^{\alpha_1}&=A_n+\tau \mathcal{L}^{1}(A_{n})+\frac{\tau^2}{2} \mathcal{L}^{2}(A_{n})+\frac{\tau^3}{6} \mathcal{L}^{3}(A_{n})+\alpha_1 P_{n},\\[0.2cm]
	\mathcal{L}^{3}(A_n)^{\alpha_2}&=\mathcal{L}^{3}(A_n)+\alpha_2\del {\mathcal{L}^{3}(A_{n})}.
	\end{aligned}
	\end{equation}
	We approximate $\frac{\partial^{(a)}}{\partial t^{(a)}} f(\cdot\,,\, n+\alpha_{f2})$ as $ F^{(a)}_{n+\alpha_{f2}}$. Following~\eqref{eq:ini}, one can readily obtain the initial data of the unknowns using the given information on $U_0$ and $V_0$ as
	\begin{equation}
	\begin{aligned}
	A_0  &= M^{-1} \left(F_0 - K U_0-CV_0\right), \\
	\mathcal{L}^{1}(A_0)&=M^{-1} \left(F_0^{(1)} - K V_0-CA_0\right),\\ 
	\mathcal{L}^{2}(A_0)&=M^{-1} \left(F_0^{(2)} - K A_0-C\mathcal{L}^{1}(A_0)\right),\\
	\mathcal{L}^{3}(A_0)&=M^{-1} \left(F_0^{(3)} - K \mathcal{L}^{1}(A_0)-C\mathcal{L}^{2}(A_0)\right).
	\end{aligned}
	\end{equation}
	Next, we derive the corresponding coefficients that deliver the desired accuracy and discuss the method's stability.

	\subsection{ Order of accuracy in time}
	
	We now determine the parameters $\gamma_1$ and $\gamma_2$ that guarantee the fourth-order accuracy of~\eqref{eq:4a}; thus, we use the following result.
	\begin{thm} \label{thm:3o}
		Assuming that the solution is sufficiently smooth with respect to time, the method~\eqref{eq:4a} is fourth-order accurate in time given
		\begin{equation} \label{eq:3ov1}
		\gamma_1=\frac{1}{2}-\alpha_{f1}+\alpha_1, \qquad \qquad     \gamma_2=\frac{1}{2}-\alpha_{f2}+\alpha_2.
		\end{equation}
	\end{thm}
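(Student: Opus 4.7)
The plan is a two-layer local truncation error analysis, reducing to the scalar test equation $m u'' + c u' + k u = f$ by diagonalising on each eigenmode of the pencil $(M,K)$ (with $C$ commuting, e.g.\ Rayleigh damping), which is the standard device for generalized-$\alpha$ analyses. I would assume that at time $t_n$ all discrete quantities match the exact Taylor data, compute the four residuals
\[
U_{n+1}-u(t_{n+1}),\ V_{n+1}-u'(t_{n+1}),\ \mathcal{L}^1(A_{n+1}) - u'''(t_{n+1}),\ \mathcal{L}^2(A_{n+1}) - u^{(4)}(t_{n+1}),
\]
and show that each is $O(\tau^5)$ precisely when~\eqref{eq:3ov1} holds, after which a standard Lax-equivalence propagation upgrades this to a global $O(\tau^4)$ error.

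The first stage exploits the fact that the triple $\bigl(\mathcal{L}^1 A,\mathcal{L}^2 A,\mathcal{L}^3 A\bigr)$ together with the second line of~\eqref{eq:4a} and the last two equations of~\eqref{eq:4aup} has exactly the algebraic shape of the classical explicit generalized-$\alpha$ scheme~\eqref{eq:galpha}, with $\alpha_{f2},\alpha_2,\beta_2,\gamma_2$ playing the roles of $\alpha_f,\alpha_m,\beta,\gamma$ and $F^{(3)}$ the forcing. The known second-order consistency of~\eqref{eq:galpha} then forces $\gamma_2 = \tfrac{1}{2} - \alpha_{f2} + \alpha_2$, with $\beta_2$ free, and establishes that $\mathcal{L}^1(A_n)$ and $\mathcal{L}^2(A_n)$ are second-order approximations of $u'''$ and $u^{(4)}$.

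In the second stage, with the inner block consistent, I expand the first line of~\eqref{eq:4a} at exact prior data. Using the PDE identities $F^{(j)}(t_n) = M u^{(j+2)}(t_n) + C u^{(j+1)}(t_n) + K u^{(j)}(t_n)$ and Taylor-expanding $F_{n+\alpha_{f1}}$ about $t_n$, I can solve for $\alpha_1 M P_n$ as a $\tau$-series and substitute back into the first two updates of~\eqref{eq:4aup}. The Taylor terms in $U_{n+1}$ and $V_{n+1}$ are already explicit up to order $\tau^5$ and $\tau^4$ respectively, so the only source of order-$\tau^5$ mismatch is the pair $\beta_1\tau^2 P_n,\ \gamma_1\tau P_n$; matching the $\tau^5$-coefficient in $V_{n+1}-u'(t_{n+1})$ reproduces the classical second-order condition applied to the outer block and yields $\gamma_1 = \tfrac{1}{2} - \alpha_{f1} + \alpha_1$, while the corresponding coefficient in $U_{n+1}$ carries an extra $\tau$-factor and is automatically $O(\tau^6)$, leaving $\beta_1$ free.

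The main obstacle is the bookkeeping of the $C$- and $K$-generated cross-terms produced when Taylor-expanding $F_{n+\alpha_{f1}}$ against the purely $M$-based Taylor data of $A_n^{\alpha_1}$, together with the $O(\tau^2)$ pollution from the only-second-order accurate surrogates $\mathcal{L}^1,\mathcal{L}^2$ entering the outer $U,V$ updates. One has to verify that this combined contribution organises into a single linear condition on $\gamma_1$ and does not introduce a second independent constraint; the modewise reduction to a scalar problem makes $M,C,K$ commuting scalars and keeps this algebra tractable.
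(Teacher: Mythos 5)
Your modewise reduction and your identification of the inner triple $\bigl(\mathcal{L}^1(A),\mathcal{L}^2(A),\mathcal{L}^3(A)\bigr)$ with a copy of the classical explicit scheme~\eqref{eq:galpha} applied to the thrice-differentiated equation are sound and are in the spirit of the paper (which sets damping to zero, works modewise, and treats the two $3\times3$ blocks separately). The genuine gap is in your outer stage: a one-step truncation-error computation that starts from ``all discrete quantities equal the exact Taylor data at $t_n$'' is the wrong tool for generalized-$\alpha$--type multi-value schemes, because the acceleration-like internal variables of the computed trajectory are \emph{not} consistent approximations of the corresponding exact derivatives at integer time levels; they carry structured $O(\tau)$ offsets that are essential to the accuracy. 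Concretely, inserting exact data into the first equation of~\eqref{eq:4a} with $C=0$ gives
\[
\alpha_1 M P_n \;=\; M\bigl[u''(t_n+\alpha_{f1}\tau)-u''(t_n)-\tau u'''-\tfrac{\tau^2}{2}u^{(4)}-\tfrac{\tau^3}{6}u^{(5)}\bigr]\;+\;K\bigl[u(t_n+\alpha_{f1}\tau)-u(t_n)\bigr],
\]
and the second bracket is $\alpha_{f1}\tau K u'(t_n)+O(\tau^2)$, so $P_n=O(\tau)$ for any parameter choice; hence $\gamma_1\tau P_n=O(\tau^2)$ and $\beta_1\tau^2 P_n=O(\tau^3)$, your claim that the only order-$\tau^5$ mismatch comes from these two terms fails, and matching $\tau^5$ coefficients cannot single out~\eqref{eq:3ov1}. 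The same phenomenon already defeats this strategy for the plain second-order method: for the homogeneous undamped test problem with exact data, \eqref{eq:g1} forces $\llbracket A_n\rrbracket=0$, so the one-step velocity residual is $-\tfrac{\tau^2}{2}u'''$ independently of $\gamma$, even though the scheme is second-order accurate. (The ``known second-order consistency'' you invoke for the inner block is itself established by eliminating the auxiliary variable, not by per-variable Taylor matching.)

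What is needed, and what the paper does, is to eliminate the internal variables: form the amplification matrix~\eqref{eq:ampm}--\eqref{eq:ampblock}, and for each diagonal block use the invariants of its characteristic polynomial (trace, sum of principal minors, determinant) to obtain the four-term recurrence~\eqref{eq:a40} satisfied by the primary unknown alone; Taylor-expanding that recurrence as in~\eqref{eq:te3} and~\eqref{eq:te6} is what produces $\gamma_2=\tfrac12-\alpha_{f2}+\alpha_2$ and $\gamma_1=\tfrac12-\alpha_{f1}+\alpha_1$ (with $\beta_1,\beta_2$ free), after which the upper off-diagonal coupling $\Xi$, treated as a residual, is added back to reach the $O(\tau^5)$ truncation error. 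You could alternatively repair your plan by a general-linear-methods style analysis with \emph{modified} internal starting values (shadow trajectories for $A_n$, $\mathcal{L}^3(A_n)$, $P_n$), but as written the per-variable expansion does not go through. A minor internal inconsistency as well: your opening paragraph demands $O(\tau^5)$ residuals for $\mathcal{L}^1(A)$ and $\mathcal{L}^2(A)$, while your own stage one (correctly, and in agreement with the paper) delivers only second-order accuracy for them, which suffices because they enter the $U$ and $V$ updates weighted by $\tau^3$ and higher powers.
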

	\begin{proof}
		Following~\cite{ hairer2010solving}, we determine the amplification matrix by associating with $\lambda=\theta^2$  the eigenvalues of the matrix $M^{-1}K$. The damping term's eigenvalues $M^{-1}C$ are $\zeta \theta$ where $\zeta$ is the damping coefficient. Without loss of the proof's generality, we set $\zeta=0$ and ignore damping. Then, substituting~\eqref{eq:4aup} into~\eqref{eq:4a}, we obtain a system of equations at each time step as
		\begin{equation}
		A\bold{U}_{n+1}=B \bold{U}_n+\bold{F}_{n+\alpha_f},
		\end{equation}
		letting $      \bold{U}_{n}^T=
		\begin{bmatrix}
		U_{n},\
		\tau V_{n},\
		\tau^2 A_{n},\
		\tau^3\mathcal{L}^{1}(A_{n}),\
		\tau^4 \mathcal{L}^{2}(A_{n}),\
		\tau^5 \mathcal{L}^{3}(A_{n})
		\end{bmatrix}^T
		$,
		\begin{equation}
		\begin{aligned}A&=
		\begin{bmatrix}
		1 & 0 & -\beta_1&0&0&0 \\
		0 & 1 & -\gamma_1&0&0&0 \\
		0 & 0 & \alpha_1&0&0&0 \\
		0 & 0 & 0&1&0&-\beta_2 \\
		0 & 0 & 0&0&1&-\gamma_2 \\
		0 & 0 & 0&0&0&\alpha_2 
		\end{bmatrix},
		\\
		B&=
		\begin{bmatrix}
		1 & 1 & \frac{1}{2}-\beta_1& \frac{1}{6}-\beta_1& \frac{1}{24}-\frac{\beta_1}{2}& \frac{1}{120}-\frac{\beta_1}{6} \\
		0 & 1 & 1-\gamma_1& \frac{1}{2}-\beta_1& \frac{1}{6}-\frac{\beta_1}{2}& \frac{1}{24}-\frac{\beta_1}{6} \\
		-\tau^2 \lambda  & -\tau^2 \lambda & \alpha_1-1-\frac{1}{2}\tau^2 \lambda&  \alpha_1-1-\frac{1}{6}\tau^2 \lambda&\frac{1}{2}( \alpha_1-1)&\frac{1}{6}(\alpha_1-1) \\
		0 & 0 & 0&1&1&\frac{1}{2}-\beta_2 \\
		0 & 0 & 0&0&1&1-\gamma_2 \\
		0 & 0 & 0&-\tau^2 \lambda&0&\alpha_2-1 
		\end{bmatrix}
		\end{aligned}
		\end{equation}
		and $\bold{F_{n+\alpha_f}}$ consists of the forcing terms $F_{n+\alpha_{f1}}$ and $(F^{(3)}_{n+\alpha_{f2}})$.  Thus, the amplification matrix $G$ becomes
		\begin{equation} \label{eq:ampm}
		G=A^{-1}B,
		\end{equation}
		with $G$ being an upper-block triangular matrix written as:
		\begin{equation}\label{eq:ampblock}
		G=	\begin{bmatrix}
		\Lambda_1 & \Xi\\
		\boldmath{0}&\Lambda_2
		\end{bmatrix},
		\end{equation}
		where
		\begin{equation}\label{eq:block}
		\begin{aligned}
		\Lambda_1&=\dfrac{1}{\alpha_1}\begin{bmatrix}
		\alpha_1-{\beta_1 \tau^2\lambda}& \alpha_1-{\beta_1 \tau^2\lambda}& \frac{1}{2}\left({\alpha_1-\beta_1 \left(\tau^2\lambda+2\right)} \right) \\        
		-{\gamma_1 \tau^2\lambda}& \alpha_1-{ \gamma_1\tau^2\lambda}& \alpha_1-\frac{\gamma_1}{2} \left(\tau^2\lambda+2\right)  \\  
		-{\tau^2\lambda}& -{\tau^2\lambda}& \alpha_1-1 -\frac{\tau^2}{2}\lambda\\
		\end{bmatrix},\\
		\Lambda_2&=\dfrac{1}{\alpha_2}\begin{bmatrix}
		\alpha_2-{\beta_2 \tau^2\lambda}& \alpha_2 & \frac{\alpha_2}{2}-{\beta_2} \\
		-{\gamma_2 \tau^2\lambda} & \alpha_2& \alpha_2-{\gamma_2}\\
		-{\tau^2\lambda} & 0 & {\alpha_2-1}
		\end{bmatrix}.
		\end{aligned}
		\end{equation}
		Following~\cite{ behnoudfar2019higher, behnoudfar2021split}, firstly, we obtain the high-order unknowns $\mathcal{L}^{1}(A_{n})$ and $\mathcal{L}^{2}(A_{n})$, being second-order accurate. Secondly, we set the parameters such that the upper block on the diagonal also delivers second-order accuracy. Then, we obtain the fourth-order accuracy by adding the high-order terms through the upper off-diagonal block to our solution. Therefore, we study each diagonal block to derive the related parameters. For this aim, for any arbitrary amplification matrix, we can state the following,
		\begin{equation} \label{eq:a40}
		G_0 \mathcal{L}^{1}(A_{n+1}) - G_1 \mathcal{L}^{1}(A_n) + G_2 \mathcal{L}^{1}(A_{n-1}) - G_3 \mathcal{L}^{1}(A_{n-2}) = 0,
		\end{equation}
		where the coefficients are invariants of the amplification matrix as $G_0 = 1$, $G_1 $ is the trace of $G$, $G_2$ is the sum of principal minors of $G$, and $G_3$ is the determinant of $G$. Using a Taylor series expansion, we have:
		\begin{equation} \label{eq:te3}
		\begin{aligned}
		\mathcal{L}^{1}(A_{n+1}) & = \mathcal{L}^{1}(A_{n})+\tau \mathcal{L}^{2}(A_{n}) + \frac{\tau^2}{2} \mathcal{L}^{3}(A_{n})+ \mathcal{O}(\tau^3), \\
		\mathcal{L}^{1}(A_{n-1}) & = \mathcal{L}^{1}(A_{n})-\tau \mathcal{L}^{2}(A_{n}) + \frac{\tau^2}{2} \mathcal{L}^{3}(A_{n})+ \mathcal{O}(\tau^3), \\
		\mathcal{L}^{1}(A_{n-2}) & = \mathcal{L}^{1}(A_{n}) -2\tau \mathcal{L}^{2}(A_{n}) + {2\tau^2} \mathcal{L}^{3}(A_{n})+ \mathcal{O}(\tau^3).
		\end{aligned}
		\end{equation}
		Setting $\gamma_2=\frac{1}{2}-\alpha_{f2}+\alpha_2$, we obtain that $\mathcal{L}^{1}(A_{n+1}), \mathcal{L}^{2}(A_{n+1})$ are second-order accurate in time. Next, we rewrite~\eqref{eq:te3} for the unknown~$U_{n+1}$ as  
		\begin{equation} \label{eq:te6}
		\begin{aligned}
		U_{n+1} & = U_n + \tau V_n + \frac{\tau^2}{2} A_n+\mathcal{R} , \\
		U_{n-1} & = U_n - \tau V_n + \frac{\tau^2}{2} A_n+\mathcal{R} , \\
		U_{n-2} & = U_n - 2\tau V_n + 4\frac{\tau^2}{2} A_n +\mathcal{R},
		\end{aligned}
		\end{equation}
		where $\mathcal{R}$ is a function of $\mathcal{L}^{1}(A_{n}), \mathcal{L}^{2}(A_{n})$ and  $\mathcal{L}^{3}(A_{n})$. We neglect this term in the analysis as it is a residual. Thus, we can prove that the remaining terms are second-order accurate in time. Then, we add the residuals to the second-order accurate solution, to produce a truncation error of $\mathcal{O}(\tau^5)$ and consequently, we obtain a fourth-order accurate scheme in time, which completes the proof.
	\end{proof}
	
	\subsection{Stability analysis and CFL condition}\label{sec:anal}
	
	We bound the spectral radius of the amplification matrix by one to deliver a stable time-marching scheme. For this, firstly, we calculate the eigenvalues of the matrix $G$ in~\eqref{eq:ampm} for the case $\Theta:=\tau^2\lambda \to 0$. Therefore, in this case, the diagonal blocks in~\eqref{eq:block} are:
	\begin{equation}\label{eq:zero}
	\begin{aligned}
	\Lambda_1&=\begin{bmatrix}
	1& 1& \frac{1}{2}- \frac{\beta_1}{\alpha_1} \\        
	0& 1& 1-\frac{\gamma_1}{\alpha_1} \\  
	0& 0& 1-\frac{1}{\alpha_1}
	\end{bmatrix},\
	\Lambda_2&=\begin{bmatrix}
	1& 1& \frac{1}{2}- \frac{\beta_2}{\alpha_2} \\        
	0& 1& 1-\frac{\gamma_2}{\alpha_2} \\  
	0& 0& 1-\frac{1}{\alpha_2}
	\end{bmatrix}.
	\end{aligned}
	\end{equation}
	Then, $G$'s eigenvalues are:
	\begin{equation}\label{eq:t0}
	\lambda_1=\lambda_2=\lambda_3=\lambda_4=1,\qquad \lambda_5=1-\frac{1}{\alpha_1}, \qquad \lambda_6=1-\frac{1}{\alpha_2}.
	\end{equation}
	The boundedness of $\lambda_5$ and $\lambda_6$ in~\eqref{eq:t0} implies that $ {\alpha_1}\geq \frac{1}{2} $ and $ {\alpha_2} \geq \frac{1}{2} $.
	
	Now, we derive the CFL condition for our method. In the analysis of the implicit generalized-$\alpha$ methods of second- and higher-order accuracy~\cite{ chung1993time, behnoudfar2019higher}, we analyze the discrete system's eigenvalue distribution in the limit $\Theta \to \infty$ and set the method's free parameters such that all eigenvalues are equal to a real constant $\rho_\infty \in [0,1]$; this parameter controls the numerical dissipation. In our explicit method, we need to find the method's conditional stability region. Thus, we exploit the amplification matrix's upper-triangular structure~\eqref{eq:ampblock} to explicitly compute the eigenvalues of $G$, which is equivalent to finding the eigenvalues of each diagonal block in~\eqref{eq:block}. Therefore, the characteristic polynomials for the diagonal blocks are:
	\begin{equation}\label{eq:charact}
	\begin{aligned}
	\Lambda_1:\qquad
	2\alpha_1\lambda_1^3&+{\lambda_1^2 \left(\Theta (2 \beta_1+2 \gamma_1+1)-6 \alpha_1+2\right)}\\&+{\lambda_1 \left(\Theta (-4 \beta_1-2 \gamma_1+1)+6 \alpha_1-4\right)}+{2 \beta_1 \Theta-2 \alpha_1+2}=0,\\
	\Lambda_2:\qquad
	2\alpha_2\lambda_2^3&+{\lambda_2^2 \left(2\Theta  \beta_2-6 \alpha_2+2\right)}+{\lambda_2 \left(\Theta (-4 \beta_2+2 \gamma_2+1)+6 \alpha_2-4\right)}\\&+{\Theta(2 \beta_2+1-2\gamma_2) -2 \alpha_2+2}=0,
	\end{aligned}
	\end{equation}
	where two roots of each characteristic polynomial in~\eqref{eq:charact} are the principal roots while the third is spurious (unphysical). We require the two principal roots to be complex conjugates except in the high-frequency regions. This requirement maximizes high-frequency dissipation while setting two eigenvalues to one in the low-frequency range to improve its approximation accuracy. Thus, we set $\beta_1$ and $\beta_2$ such that the complex parts of the principal roots of the blocks $\Lambda_1$ and $\Lambda_2$, respectively, vanish in high-frequency regions; this setting changes the largest eigenvalue from one to a user-defined value, which results in an approach similar to the implicit generalized-$\alpha$ methods. Additionally, we define the limit in which this bifurcation happens at block $i,\, i=1, \,2$, by $\Omega_{bi}$. We define the critical stability limit for our explicit method as $\Omega_{si}$ and show the stability region of block~$i$. 
	
	We find the parameter values using the characteristic equation corresponding to the diagonal block~$i$ of the amplification matrix as:
	\begin{equation}\label{eq:char}
	\sum^{3}_{j=0}\left(\tilde{a}_i^j+\tilde{y}_i^j \Theta^2\right)\lambda_i^{(3-j)}=0,
	\end{equation}
	where $\tilde{a}_i$ and $\tilde{y}_i$ are functions of the parameters $\gamma_i$ and $\beta_i$. Each block~$i$ has three eigenvalues; we set two of them to $\rho_{bi}$ and one becomes $\rho_{si}$.  Therefore, we rewrite the characteristic polynomial~\eqref{eq:char} as
	\begin{equation}\label{eq:root}
	\left(\lambda_i^2+\rho_{bi}^2+2\lambda_i\rho_{bi}\right)\left(\lambda_i+\rho_{si}\right)=0.
	\end{equation}
	Next, to have all three roots with real values at the bifurcation limit, we equate~\eqref{eq:root} to~\eqref{eq:charact} to obtain:
	\begin{equation}
	\begin{aligned}
	\Omega_{b1}&=2 - 2\rho_{b1} - \rho_{s1} +\rho_{s1} \rho_{b1}^2,\\
	\Omega_{b2}&=2 + 2\rho_{b2} + \rho_{s2} -\rho_{s2} \rho_{b2}^2,\\
	\alpha_1&=\frac{2 - (1 + \rho_{b1}) \rho_{s1}}{(-1 + \rho_{b1}) (-1 + \rho_{s1})}\\
	\alpha_2&=\frac{2 + (1 - \rho_{b2}) \rho_{s2}}{(1 + \rho_{b2}) (1 + \rho_{s2})}.
	\end{aligned}
	\end{equation}
	\begin{remark}
		Herein, we constrain $\rho_{si} \leq \rho_{bi}$ which results in $\Omega_{bi} \leq \Omega_{si}$. We can maximize the bifurcation region $\Omega_{bi}$ by setting $\rho_{si} = \rho_{bi}$ and consequently, obtain a one-parameter family of algorithms, Figure~\ref{fig:rhovar} shows this numerically.   
	\end{remark}
	
	Figure~\ref{fig:rhovar} shows how the bifurcation and stability regions change as the user-defined parameter changes. 
	
	\begin{figure}[!ht]    
		\begin{subfigure}{.5\textwidth}
			\centering\includegraphics[width=6.6cm]{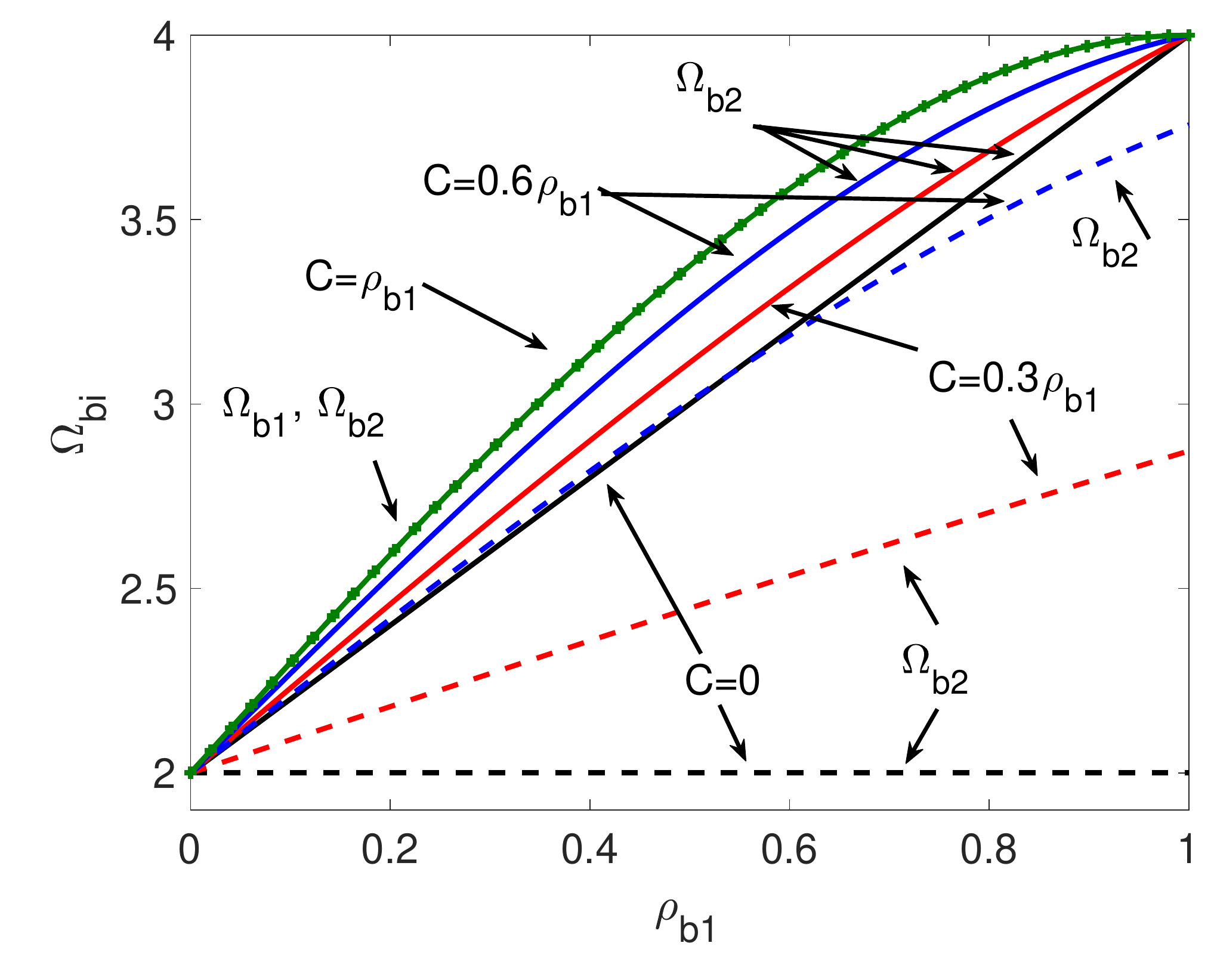}     
			\caption{$\rho_{s1}=\rho_{s2}=\rho_{b2}=\,C$.}
		\end{subfigure}
		\begin{subfigure}{.5\textwidth}
			\centering\includegraphics[width=6.6cm]{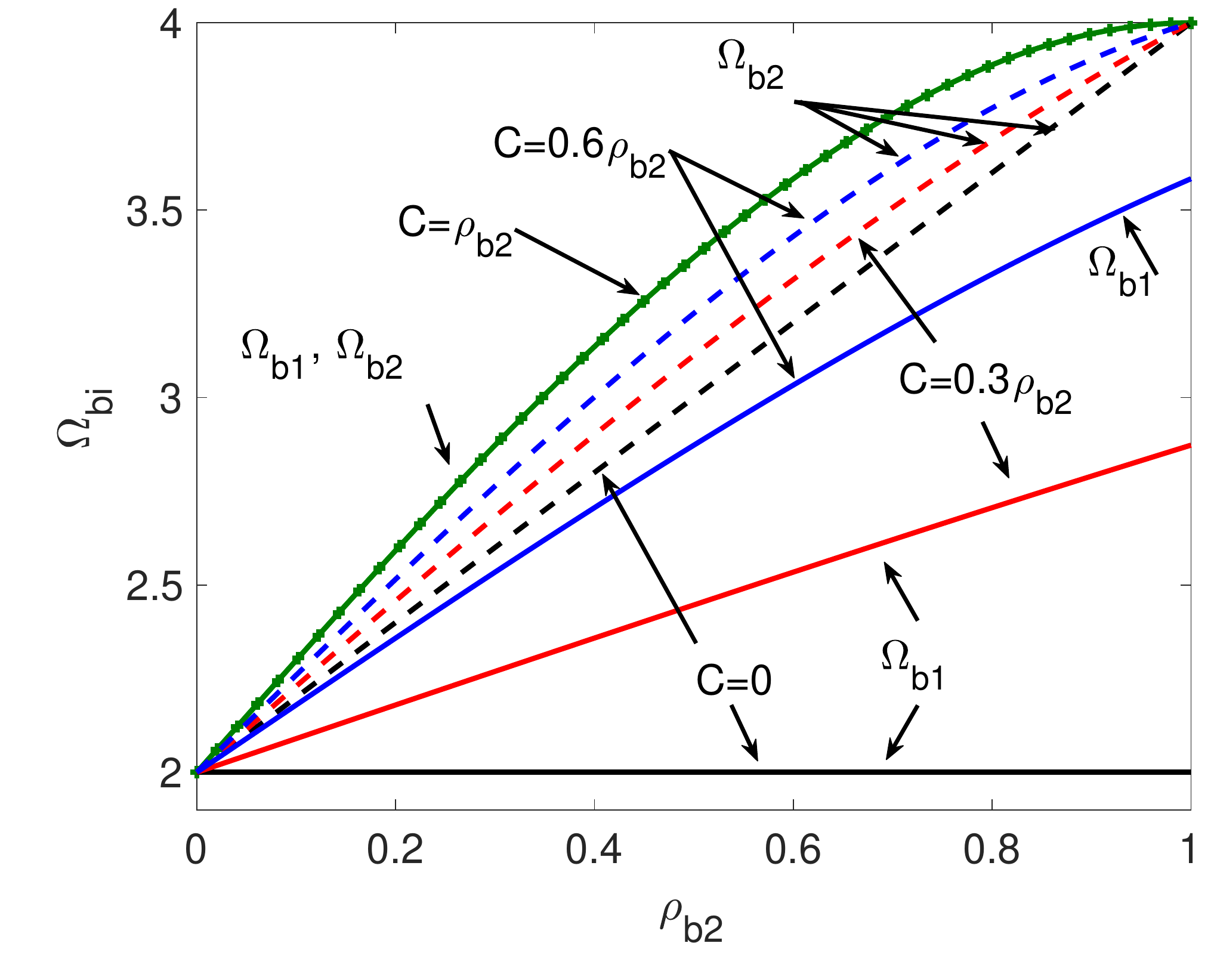} 
			\caption{$\rho_{s2}=\rho_{s1}=\rho_{b1}=\,C$.}
		\end{subfigure}
		\caption{Effect of user-defined parameters on the bifurcation regions $\Omega_{b1}$ and $\Omega_{b2}$.}
		\label{fig:rhovar}
	\end{figure}
	
	To set the complex part of the eigenvalues equal to zero in the high-frequency regions, we define the parameters $\beta_i$ as
	\begin{equation}
	\begin{aligned}
	\beta_1&=\frac{(1 + \rho_{b1})(-1 + \rho_{b1} \rho_{s1})^2}{(-1 + \rho_{b1})^2 (-1 + \rho_{s1})(-2 + \rho_{s1} + \rho_{b1} \rho_{s1})},\\
	\beta_2&=\frac{-5 - 3\rho_{b2} - 4\rho_{s2} + 2\rho_{b2}\rho_{s2} + 2 \rho_{b2}^2\rho_{s2} - \rho_{s2}^2+\rho_{b2}\rho_{s2}^2}{(1 + \rho_{b2})^2(-2 - 3\rho_{s2}+\rho_{b2} \rho_{s2}- \rho_{s2}^2 + \rho_{b2} \rho_{s2}^2)}.
	\end{aligned}
	\end{equation} 
	Finally, we find the critical values at which each block's spectral radius becomes larger than one. For this, we set $\Theta=\Omega_{si}$ and $\lambda_i=1$ in~\eqref{eq:char} and solve the resulting equation. Thus, we introduce the critical values $\Omega_{si}$ as 
	\begin{equation}
	\begin{aligned}
	\Omega_{s1}&=\frac{4(1-\rho_{b1})(2-\rho_{b1}\rho_{s1}-\rho_{s1})(3+\rho_{b1}-\rho_{s1}-3\rho_{b1}\rho_{s1})}{2(5-\rho_{b1}^2)+
		(5-13\rho_{b1}-\rho_{b1}^2-\rho_{b1}^3)\rho_{s1}-(1+\rho_{b1})^3\rho_{s1}^2},\\
	\Omega_{s2}&=\frac{4(1+\rho_{b2})(2-\rho_{b2}\rho_{s2}+\rho_{s2})(3-\rho_{b2}+\rho_{s2}-3\rho_{b2}\rho_{s2})}{2(5-\rho_{b2}^2)+
		(5-13\rho_{b2}-\rho_{b2}^2+\rho_{b2}^3)\rho_{s2}-(1-\rho_{b2})^3\rho_{s2}^2}.
	\end{aligned}
	\end{equation}
	
	\begin{remark}
		Following closely~\cite{ behnoudfar2018variationally, HULBERT1996175}, $\alpha_{f1}$  and $\alpha_{f2}$ are free parameters with respect to spectral radius. Therefore, we set $\alpha_{f1}=1$ to deliver fourth-order accuracy. Accordingly, to optimally combine low- and high-frequency dissipation, we set $\alpha_{f2}=0$.
	\end{remark}
	
	Therefore, by setting $0\leq \rho_{b1},\rho_{b2}\leq1 $, we control the system's spectral radius $\rho$ and, consequently, the high-frequency numerical damping. Figure~\ref{fig:conn} shows how the user controls $\rho$ and the stability region $\Omega_s$; setting $\rho_b=\rho_s=0.99$ leads to the largest stability region $\Omega_s=4(\theta \tau)^2=4\Theta$, equivalent to the stability region of the second-order central difference method~\cite{ hairer2010solving}. 
	
	\begin{figure}[!ht]    
		\includegraphics[width=14cm]{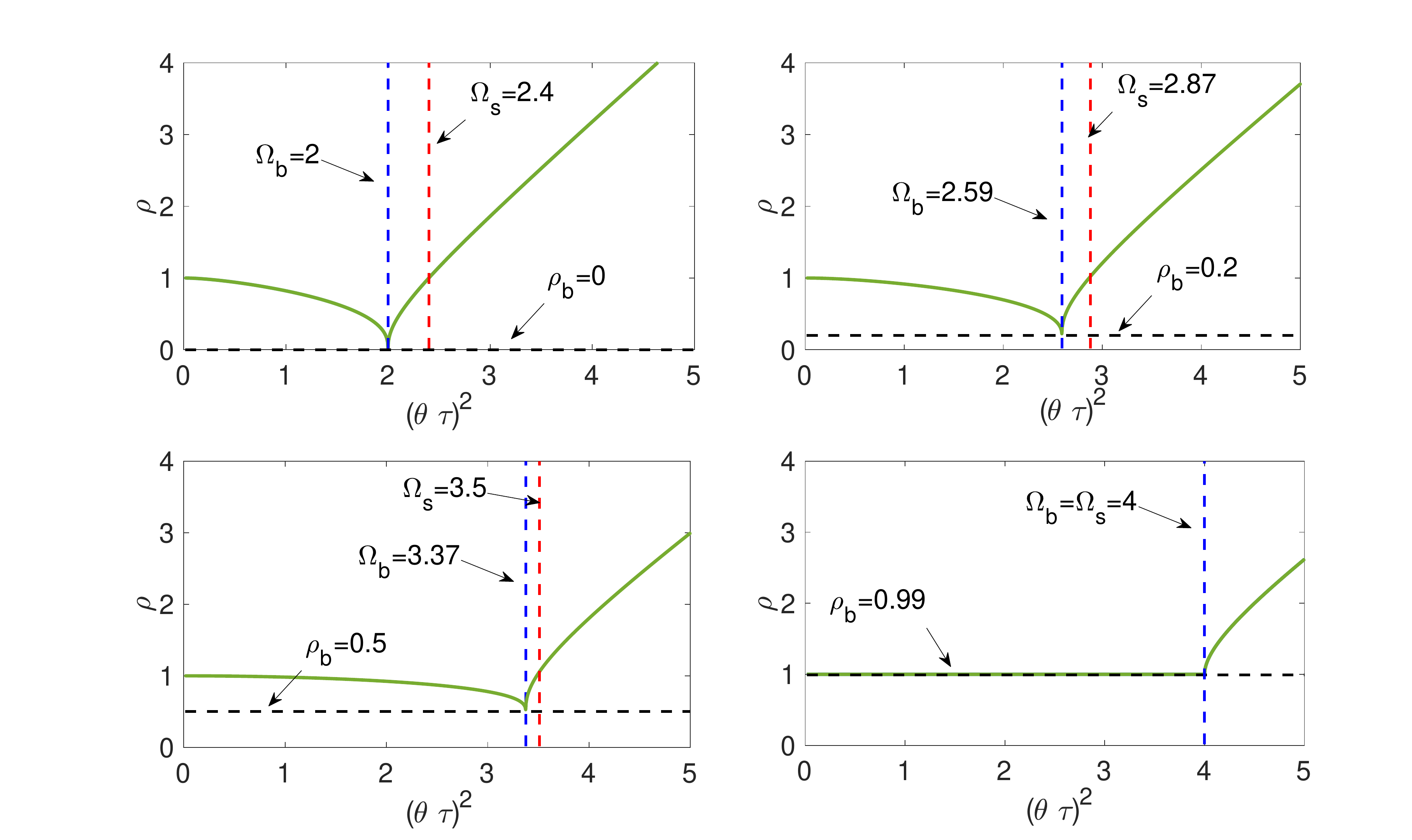}     
		\caption{Spectral radius $\rho$  behaviour for different $\rho_{b}$ values, where $\rho_{b1}=\rho_{b2}=\rho_{s1}=\rho_{s2}$.}
		\label{fig:conn}
	\end{figure}

	\section{General $2k^{th}$-order accuracy in time}\label{sec:4}
	
	This section extends our approach to deliver $2k^{th}$-order accurate methods with $k\ge2$. That is, we introduce our $2k^{th}$-order explicit generalized-$\alpha$ method as:  
	\begin{equation} \label{eq:ho1}
	\begin{aligned}
	MA_{n}^{\alpha_1}+CV_n+KU_{n}&=F_{n+\alpha_{f1}}, \\
	M\mathcal{L}^{3j-3}(A_n)^{\alpha_j}+C\mathcal{L}^{3j-4}(A_n)+K\mathcal{L}^{3j-5}(A_n)&=F^{(3j-3)}_{n+\alpha_{fj}} , ,\qquad j=2, \cdots, k-1, \\
	M\mathcal{L}^{3k-3}(A_n)^{\alpha_k}+C\mathcal{L}^{3k-4}(A_n)+K\mathcal{L}^{3k-5}(A_n)&=F^{(3k-3)}_{n+\alpha_{fk}} ,
	\end{aligned}
	\end{equation} 
	and updating the system using the following
	\begin{equation} \label{eq:ho2}
	\begin{aligned}
	U_{n+1} & = U_n + \tau V_n + \frac{\tau^2}{2} A_n + \frac{\tau^3}{6} \mathcal{L}^{1}(A_n)+ \cdots+\frac{\tau^{3k-1}}{(3k-1) !} \mathcal{L}^{3k-3}(A_n)+\beta_1 \tau^2 P_{n,1}, \\
	V_{n+1}  &= V_n + \tau A_n + \frac{\tau^2}{2}  \mathcal{L}^{1}(A_n) + \cdots+\frac{\tau^{3k-2}}{(3k-2) !} \mathcal{L}^{3k-3}(A_n)+ \tau \gamma_1 P_{n,1}, \\
	\mathcal{L}^{3j-5}(A_{n+1})&= \mathcal{L}^{3j-5}(A_n) + \tau \mathcal{L}^{3j-4}(A_n) + \cdots+ \frac{\tau^{3(k-j)+2}}{(3(k-j)+2) !} \mathcal{L}^{3k-3}(A_n)+ \tau^2 \beta_j P_{n,j}, \\
	\mathcal{L}^{3j-4}(A_{n+1})&= \mathcal{L}^{3j-4}(A_{n}) +  \cdots+ \frac{\tau^{3(k-j)+1}}{(3(k-j)+1) !} \mathcal{L}^{3k-3}(A_{n})+ \tau \gamma_j P_{n,j},\qquad j=2, \cdots, k-1, \\[0.2cm] 
	\mathcal{L}^{3k-5}(A_{n+1})&= \mathcal{L}^{3k-5}(A_n) + \tau {L}^{3k-4}(A_n) + \frac{\tau^2}{2}\mathcal{L}^{3k-3}(A_n) + \tau^2 \beta_k  \llbracket \mathcal{L}^{3k-3}(A_{n} \rrbracket, \\
	\mathcal{L}^{3k-4}(A_{n+1}) &=\mathcal{L}^{3k-4}(A_{n}) + \tau \mathcal{L}^{3k-3}(A_{n}) + \tau \gamma_k \llbracket \mathcal{L}^{3k-3}(A_{n} \rrbracket,
	\end{aligned}
	\end{equation}
	with
	\begin{equation} \label{eq:ho3}
	\begin{aligned}
	P_{n,1}&=A_{n+1}-A_n-\tau\mathcal{L}^{1}(A_{n})-\cdots-\frac{\tau^{3k-3}}{(3k-3) !} \mathcal{L}^{3k-3}(A_{n}),\\
	A_{n}^{\alpha_1}&=A_n+\tau \mathcal{L}^{1}(A_{n})+\cdots+\frac{\tau^{3k-3}}{(3k-3) !} \mathcal{L}^{3k-3}(A_{n})+\alpha_1 P_{n,1},\\
	P_{n,j}&=\mathcal{L}^{3j-3}(A_{n+1})-\mathcal{L}^{3j-3}(A_n)- \cdots- \frac{\tau^{3(k-j)}}{3(k-j) !}\mathcal{L}^{3k-3}(A_{n}),\\
	A_{n}^{\alpha_j}&=\mathcal{L}^{3j-3}(A_n)+\cdots+\frac{\tau^{3(k-j)}}{3(k-j) !}\mathcal{L}^{3k-3}(A_{n})+\alpha_jP_{n,j}, \qquad j=2, \cdots, k-1,\\
	\mathcal{L}^{3k-3}(A_n)^{\alpha_k}&=\mathcal{L}^{3k-3}(A_n)+\alpha_k \llbracket \mathcal{L}^{3k-3}(A_{n}) \rrbracket.
	\end{aligned}
	\end{equation}
	
	Letting $k=2$ recovers the fourth-order explicit generalized-$\alpha$ method of Section~\ref{sec:3}. Using similar arguments to the proof of Theorem~\ref{thm:3o}, we can establish higher-order schemes in the form of~\eqref{eq:ho1} and~\eqref{eq:ho2} by using high-order Taylor series. Then, to seek $2k^{th}$-order of accuracy, we substitute~\eqref{eq:ho2} into~\eqref{eq:ho1} and find a matrix system as
	\begin{equation}\label{eq:eig2}
	L \bfs{U}_{n+1} = R \bfs{U}_n+\bold{F}_{n+\alpha_f}.
	\end{equation}
	Therefore, the amplification matrix of the $2k^{th}$-order accurate scheme becomes:
	\begin{equation}\label{eq:HOampblock}
	G_k=L^{-1}R=\begin{bmatrix}
	\bold{ \Lambda}_1 &\bold{\Xi}_{12} &\cdots&\cdots& \bold{\Xi}_{1k}\\
	\boldsymbol{0}&\bold{\Lambda}_2&\bold{\Xi}_{23} &\cdots& \bold{\Xi}_{2k}\\
	\vdots& &\ddots& \\
	\boldsymbol{0}&\boldsymbol{0}&\cdots & \bold{\Lambda}_{k-1}& \bold{\Xi}_{k-1}\\
	\boldsymbol{0}&\boldsymbol{0}&\cdots & \boldsymbol{0}&\bold{ \Lambda}_k
	\end{bmatrix},
	\end{equation}
	with
	\begin{equation}\label{eq:HOblock}
	\begin{aligned}
	\bold{\Lambda}_j&=\dfrac{1}{\alpha_j}\begin{bmatrix}
	\alpha_j-{\beta_j \Theta}& \alpha_j-{\beta_j \Theta}& \frac{1}{2}\left({\alpha_j-\beta_j \left(\Theta+2\right)} \right) \\        
	-{\gamma_j \Theta}& \alpha_j-{\gamma_j \Theta}& \alpha_j-\frac{\gamma_j}{2} \left(\Theta+2\right)  \\  
	-{\Theta}& -{\Theta}& \alpha_j-1 -\frac{\Theta}{2}\\
	\end{bmatrix},\qquad j=1, \cdots, k-1\\
	\bold{\Lambda}_k&=\dfrac{1}{\alpha_k}\begin{bmatrix}
	\alpha_k-{\beta_k \Theta}& \alpha_k & \frac{\alpha_k}{2}-{\beta_k} \\
	-{\gamma_k \Theta} & \alpha_k& \alpha_k-{\gamma_k}\\
	-{\Theta} & 0 & {\alpha_k-1}
	\end{bmatrix}.
	\end{aligned}
	\end{equation}
	
	\begin{remark}
		The amplification matrix~\eqref{eq:HOampblock} is an upper-triangular block matrix; thus, we neglect the non-diagonal block contributions in the eigenvalue analysis.
	\end{remark}
	
	\begin{thm}
		Assuming $u_h(t)$ and $\dot{u}_h(t)$ have sufficient regularity in time, our semi-discrete method~\eqref{eq:ho1}-\eqref{eq:ho3} for advancing~\eqref{eq:mp} is  $2k^{th}$-order accurate in time when
		\begin{equation} \label{eq:HOgamma}
		\begin{aligned}
		\gamma_j&=\frac{1}{2}-\alpha_{fj}+\alpha_j,\qquad j=1, \cdots, k-1,\quad
		\gamma_k&=\frac{1}{2}-\alpha_{fk}+\alpha_k.
		\end{aligned}
		\end{equation}
		\begin{proof}
			The amplification matrix~\eqref{eq:HOampblock} is an upper-diagonal block matrix; each block is a $3 \times 3$ matrix. The last two diagonal blocks $\Lambda_{k-1}$ and $\Lambda_{k}$ have similar entries to those of the amplification matrix of the fourth-order method. The other diagonal blocks are analogous to $\Lambda_{k-1}$. Hence, separately for each block, we determine the relevant terms for~\eqref{eq:a40} and consider the higher-order terms to obtain second-order accuracy. Consequently, after solving the whole system and adding the higher-order terms to the unknowns $u_h$ and $v_h$, we have a truncation error of $\mathcal{O}(\tau^{2k+1})$.
		\end{proof}
	\end{thm}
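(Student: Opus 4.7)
The plan is to mimic the proof of Theorem~\ref{thm:3o} but executed blockwise. By~\eqref{eq:HOampblock} the amplification matrix $G_k$ is upper block-triangular, so its spectrum decouples across the diagonal blocks $\bfs{\Lambda}_j$ and, more importantly for the truncation analysis, the characteristic polynomial of each diagonal block provides an independent three-term recurrence for the primary unknown carried by that block. The first $k-1$ diagonal blocks share the same symbolic template as the $\Lambda_1$ block in~\eqref{eq:block}, while $\bfs{\Lambda}_k$ matches the $\Lambda_2$ template; accordingly the algebra of the fourth-order case can be reused verbatim for each block in isolation.

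For each block $j$, I would write the Cayley--Hamilton-type recurrence
\begin{equation*}
G_{0,j}\,X_{n+1}^{(j)} - G_{1,j}\,X_n^{(j)} + G_{2,j}\,X_{n-1}^{(j)} - G_{3,j}\,X_{n-2}^{(j)} = 0,
\end{equation*}
with $G_{0,j}=1$, $G_{1,j}=\mathrm{tr}\,\bfs{\Lambda}_j$, $G_{2,j}$ the sum of principal $2\times 2$ minors of $\bfs{\Lambda}_j$, and $G_{3,j}=\det\bfs{\Lambda}_j$, where $X^{(j)}$ denotes the block-local high-order variable. Substituting the Taylor expansions of $X_{n\pm r}^{(j)}$ about $t_n$ and matching coefficients up to $\mathcal{O}(\tau^2)$ reduces, after the same manipulation as in~\eqref{eq:te3}, to the single scalar constraint $\gamma_j=\tfrac{1}{2}-\alpha_{fj}+\alpha_j$ stated in~\eqref{eq:HOgamma}. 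This secures second-order accuracy of each block-local variable $\mathcal{L}^{3j-5}(A)$ and, by the companion update relations in~\eqref{eq:ho2}, also of $\mathcal{L}^{3j-4}(A)$.

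To pass from per-block second-order accuracy to global $2k^{th}$-order accuracy, I would unfold the coupling through the off-diagonal blocks $\bfs{\Xi}_{ij}$ by induction from the bottom upward. Starting from $j=k$: $\mathcal{L}^{3k-3}(A_n)$ is a second-order-accurate approximation of $\partial_t^{3k-3}u$, which through~\eqref{eq:ho2} is fed into the block $j=k-1$ as a residual analogous to the $\mathcal{R}$ term that appears in~\eqref{eq:te6}. Because this residual is multiplied by $\tau^{3(k-j)+\ell}$ in the update of $\mathcal{L}^{3j-5}$ and $\mathcal{L}^{3j-4}$, each hierarchical step recovers one further order in the Taylor expansion. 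Iterating through $j=k-1,k-2,\ldots,1$, the final update of $U_{n+1}$ in~\eqref{eq:ho2} contains all derivatives up to $\partial_t^{3k-1}u$ with consistent coefficients, yielding a local truncation error of $\mathcal{O}(\tau^{2k+1})$ and hence global accuracy of order $2k$.

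The main obstacle is the bookkeeping of the off-diagonal contributions: verifying that $\bfs{\Xi}_{ij}$ do not re-inject low-order errors into the displacement equation, so that the analogue of the residual $\mathcal{R}$ in~\eqref{eq:te6} genuinely loses only one order of $\tau$ per hierarchical level. This requires a careful induction on $j$ together with a consistency check that the parameter choices~\eqref{eq:HOgamma}, combined with the defining relations of $P_{n,j}$ in~\eqref{eq:ho3}, annihilate exactly the leading error terms propagated by each $\bfs{\Xi}_{ij}$, so that the accumulated error remains at the $\mathcal{O}(\tau^{2k+1})$ level claimed.
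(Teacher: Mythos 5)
Your proposal follows essentially the same route as the paper's proof: exploit the upper block-triangular structure of $G_k$ in~\eqref{eq:HOampblock}, note that the first $k-1$ diagonal blocks reuse the $\Lambda_1$ template and $\bfs{\Lambda}_k$ the $\Lambda_2$ template from the fourth-order case, apply the recurrence~\eqref{eq:a40} blockwise to obtain the constraints~\eqref{eq:HOgamma} and second-order accuracy of each block-local variable, and then recover the $\mathcal{O}(\tau^{2k+1})$ truncation error by adding the higher-order terms through the hierarchy. In fact your bottom-up induction and the explicit caution about the off-diagonal blocks $\bfs{\Xi}_{ij}$ not re-injecting low-order errors spell out bookkeeping that the paper's (very terse) proof leaves implicit, so the proposal is, if anything, more complete than the argument given there.
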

	
	\subsection{CFL condition and dissipation control}
	
	We define the system's spectral behaviour, the bifurcation regions, and its CFL conditions; thus, we study the amplification matrix's eigenvalues~\eqref{eq:HOampblock}. Accordingly, we calculate each diagonal block's eigenvalues. The first $k-1$ blocks have identical structures; hence, following Section~\ref{sec:3}, we propose these algorithmic parameters as
	\begin{equation}
	\boxed{
		\begin{aligned}
		\Omega_{bj}&=2 - 2\rho_{bj} - \rho_{sj} +\rho_{sj} \rho_{bj}^2, \qquad j=1,\cdots,k-1,\\
		\alpha_j&=\frac{2 - (1 + \rho_{bj}) \rho_{sj}}{(-1 + \rho_{bj}) (-1 + \rho_{sj})}, \\
		\beta_j&=\frac{(1 + \rho_{bj})(-1 + \rho_{bj} \rho_{sj})^2}{(-1 + \rho_{bj})^2 (-1 + \rho_{sj})(-2 + \rho_{sj} + \rho_{bj} \rho_{sj})},\\
		\Omega_{bk}&=2 + 2\rho_{bk} + \rho_{sk} -\rho_{sk} \rho_{bk}^2,\\
		\alpha_k&=\frac{2 + (1 - \rho_{bk}) \rho_{sk}}{(1 + \rho_{bk}) (1 + \rho_{sk})},\\
		\beta_k&=\frac{-5 - 3\rho_{bk} - 4\rho_{sk} + 2\rho_{bk}\rho_{sk} + 2 \rho_{bk}^2\rho_{sk} - \rho_{sk}^2+\rho_{bk}\rho_{sk}^2}{(1 + \rho_{bk})^2(-2 - 3\rho_{sk}+\rho_{bk} \rho_{sk}- \rho_{sk}^2 + \rho_{bk} \rho_{sk}^2)},
		\end{aligned}}
	\end{equation}
	and the critical values $\Omega_{sj}$ are 
	\begin{equation}
	\begin{aligned}
	\Omega_{sj}&=\frac{4(1-\rho_{bj})(2-\rho_{bj}\rho_{sj}-\rho_{sj})(3+\rho_{bj}-\rho_{sj}-3\rho_{bj}\rho_{sj})}{2(5-\rho_{bj}^2)+
		(5-13\rho_{bj}-\rho_{bj}^2-\rho_{bj}^3)\rho_{sj}-(1+\rho_{bj})^3\rho_{sj}^2},\quad j=1,\cdots,k-1,\\
	\Omega_{sk}&=\frac{4(1+\rho_{bk})(2-\rho_{bk}\rho_{sk}+\rho_{sk})(3-\rho_{bk}+\rho_{sk}-3\rho_{bk}\rho_{sk})}{2(5-\rho_{bk}^2)+
		(5-13\rho_{bk}-\rho_{bk}^2+\rho_{bk}^3)\rho_{sk}-(1-\rho_{bk})^3\rho_{sk}^2}.\\\\
	\end{aligned}
	\end{equation}
	
	\begin{remark}
		We constrain $\rho_{sm} \leq \rho_{bm}, \, m=1, \cdots, k$ and maximize the bifurcation regions $\Omega_{bm}$ by setting $\rho_{sm} = \rho_{bm}=\rho $ with $0 \leq \rho <1$ as a user-defined parameter; our method is a one-parameter family of time-marching algorithms. Additionally, the stability region is independent of the accuracy order; we obtain higher-order accuracy  without affecting any features of the second-order algorithm (i.e., preserve stability regions, bifurcation limit, and dissipation control).  Similarly, $\alpha_{fj}$ and $\alpha_{fk}$ are free parameters; thus, we set $\alpha_{fj}=1, \,\,j=1,\,\cdots,\,k-1$, and $\alpha_{fk}=0$.
	\end{remark}
	
	\begin{remark}
		The most expensive computational cost of our explicit time marching is the factorization of the mass matrices. Next, we discuss a state-of-the-art approach to precondition the mass matrices resulting from isogeometric analysis that minimize this cost for complex geometries as well as for single- and multi-patch discretizations.
	\end{remark}

	\section{Solver}\label{sec:5}
	
	The method we propose has many advantages: for instance, the CFL condition and the dissipation control are independent of the order of accuracy; however, it requires solving $k$   mass-matrix systems.  Herein, we discuss a method to accelerate these systems' solution when using isogeometric analysis. Firstly, we describe efficient and robust preconditioners for maximum-continuity isogeometric mass matrices for single and multi-patch geometries. Then, we adopt an iterative solver (i.e., preconditioned conjugate gradients, PCG) to calculate the solutions.
	
	\subsection{B-splines}
	
	Given two positive integers $p$ and $m$, consider an open knot vector $$\Xi := \{\xi_1,\ldots, \xi_{m+p+1}\}$$ such that
	$$
	\xi_1 =\ldots=\xi_{p+1} < \xi_{p+2} \le \ldots \le 
	\xi_{m} < \xi_{m+1}=\ldots=\xi_{m+p+1},
	$$
	where interior repeated knots are allowed with maximum multiplicity $p$. We assume $\xi_1 = 0$ and $\xi_{m+p+1} =1 $. From the knot vector $\Xi$, we define degree-$p$  B-spline functions  using the Cox-De Boor recursive formula: we start with piecewise constants ($p=0$):
	\begin{equation*}
	\widehat b_{i,0}(\zeta) = \left \{
	\begin{array}{ll}
	1 & \text{if } \xi_i \leq \zeta < \xi_{i+1}, \\
	0 & \text{otherwise},
	\end{array}
	\right.
	\end{equation*}
	for $p \ge 1$, the following recursion defines the B-spline functions
	\begin{equation*}
	\widehat b_{i,p}(\zeta) = \frac{\zeta - \xi_i}{\xi_{i+p} - \xi_i}  \widehat b_{i,p-1}(\zeta) + \frac{\xi_{i+p+1} - \zeta}{\xi_{i+p+1} - \xi_{i+1}}  \widehat b_{i+1,p-1}(\zeta),
	\end{equation*}
	where $0/0 = 0$. Each B-spline $ \widehat b_{i,p}$ depends only on $p+2$ knots, which we collect in a local knot vector
	\begin{equation*}
	\Xi_{i,p}:= \{ \xi_{i}, \ldots,\, \xi_{i+p+1}\},
	\end{equation*}
	is non-negative and its support is the interval $[\xi_i , \xi_{i+p+1}]$. Moreover, these \mbox{B-spline} functions constitute a partition of unity, that is
	\begin{align} \label{eq:partition_unity}
	\sum_{i=1}^{m} \widehat b_{i,p} (x)=1, & & \forall x \in (0,1).
	\end{align}
	The univariate spline space is
	\begin{equation*}
	\widehat{\mathcal{S}}_{h} = \widehat{\mathcal{S}}_{h}([0,1]) : = \mathrm{span}\{\widehat{b}_{i,p}\}_{i = 1}^m,
	\end{equation*}
	where $h$ denotes the maximal mesh-size. We may drop the degree $p$ from the notation when it will not lead to confusion (see, e.g.,~\cite{ Cottrell2009, DeBoor2001}).
	
	We define multivariate B-splines from univariate ones by tensorization, as is common practice. Let $d$ be the space dimension and consider open knot vectors ${\Xi_k = \{\xi_{k,1}, \ldots, \xi_{k,m + p + 1} \}}$ and a set of multi-indices ${\mathbf{I}:=\{ \mathbf{i}=(i_1,\ldots, i_d): \, 1 \leq i_l \leq m \}}$. For each multi-index $\mathbf{i}=(i_1,\ldots, i_d)$, we introduce the $d$-variate B-spline,
	\begin{equation*}
	\label{eq:multivariate-B-splines}
	\widehat B_{\mathbf{i}}(\mathbf{\zeta}) :=  \widehat
	b[\Xi_{i_1,p}](\zeta_1) \ldots  \widehat
	b[\Xi_{i_d,p}](\zeta_d).
	\end{equation*}
	The  corresponding spline space is
	\begin{equation*}
	\widehat{\mathcal{S}}_{h} =\widehat{\mathcal{S}}_{h}([0,1]^d)  := \mathrm{span}\left\{B_{\mathbf{i}} : \, \mathbf{i} \in \mathbf{I} \right\},
	\end{equation*} 
	where $h$ is the maximal mesh-size in all dimensions, that is,
	\begin{equation*}
	h:= \max_{\substack{1 \leq k \leq d \\ 1 \leq i \leq m+p+1 }}\{ | \xi_{k,i+1} - \xi_{k,i} |\}.
	\end{equation*}
	
	\begin{ass}\label{ass:quasi_uniform_mesh}
		Knot vectors are quasi-uniform; there exists $\alpha > 0$, $h$ independent, such that each non-empty knot span $( \xi_{k,i} , \xi_{k,i+1})$ fulfills $ \alpha h \leq \xi_{k,i+1} - \xi_{k,i}$, for $1\leq k \leq d$.
	\end{ass}
	
	\subsection{Single-patch geometric space}
	
	We consider a single-patch domain $\Omega \subset \mathbb{R}^d$, a $d$-dimensional parametrization $\vett[F]$, 
	\begin{align*}
	\Omega = \boldsymbol{F} (\widehat{\Omega}), & & \text{with } \boldsymbol{F}(\boldsymbol{\xi}) = \sum_{\mathbf{i}}  \boldsymbol{C}_{\mathbf{i}}  \widehat  B_{\mathbf{i}} (\boldsymbol{\xi}),
	\end{align*}
	where $\boldsymbol{C}_{\mathbf{i}} $ are the control points and $ \widehat B_{\mathbf{i}}$ are tensor-product B-spline basis functions defined on a parametric patch $\widehat{\Omega}:=(0,1)^d$, where $\vett[F]$ is an invertible map. Following the isoparametric paradigm, isogeometric basis functions $B_{\mathbf{i}}$ are the push-forward of the parametric basis, that is, $B_{\mathbf{i}} = \widehat B_{\mathbf{i}}\circ \boldsymbol{F} ^{-1}$.  Thus, the isogeometric space on $\Omega$ is defined as
	\begin{equation*}\label{eq:disc_space}
	\mathcal{S}_{h} = \mathcal{S}_{h}(\Omega) := \mathrm{span}\left\{  B_{\mathbf{i}}:=\widehat B_{\mathbf{i}} \circ \mathbf{F}^{-1} \ : \ \mathbf{i} \in \mathbf{I}   \right\}. 
	\end{equation*}
	We introduce a co-lexicographical reordering of the basis functions and write 
	\begin{equation}\label{def:parametric_spline}
	\mathcal{S}_{h}= \spann \left\{  B_{\mathbf{i}} : \, \mathbf{i} \in \mathbf{I} \right\} = \spann\left\{ B_{i} \right\}_{i=1}^{\Ndof}.
	\end{equation}
	
	\subsection{Multi-patch B-splines}\label{sec:mp}
	
	Following~\cite{ daveiga_buffa_sangalli_2014}, a multi-patch domain $\Omega \subset \mathbb{R}^d$ is an open set,  a subdomain union
	\begin{equation}\label{def:patches}
	\overline{\Omega}= \bigcup_{{\ptca}=1}^{\NOmega} \overline{\Omega^{({\ptca})}},
	\end{equation} 
	where $\NOmega$ is the number of subdomains, $\Omega^{({\ptca})}=\vett[F]^{({\ptca})}(\widehat{\Omega})$ are the disjoint patches (subdomain pre-images), each $\vett[F]^{({\ptca})}$ has a different spline parametrization, and the super index $({\ptca})$  refers to $\Omega^{({\ptca})}$. We introduce for each patch $\Omega^{({\ptca})}$, B-spline spaces
	\begin{align*}
	\widehat{\mathcal{S}}^{({\ptca})}_{h}:= \spann\left\{ \widehat{B}^{({\ptca})}_{i} : \, i=1, \ldots , \Ndof^{({\ptca})} \right\}. 
	\end{align*}
	and isogeometric spaces
	\begin{align*}
	\mathcal{S}^{({\ptca})}_{h}:= \spann\left\{  B^{({\ptca})}_{i} \ : \ i=1,\ldots , N_{\text{dof}}^{({\ptca})}   \right\}.
	\end{align*}
	We assume for simplicity that all patches have the same degree $p$. We define an isogeometric space on $\Omega$ by imposing continuity at the interfaces between patches, that is
	\begin{equation}\label{def:global_space}
	V_h:=\left\{ v \in C^0(\Omega) : v|_{\Omega^{({\ptca})}} \in \mathcal{S}^{({\ptca})}_{h} \text{ for }{\ptca}=1, \ldots , \NOmega \right\}.
	\end{equation}
	We assume a suitable conformity to construct a basis for space $V_{h}$; for all $\ptca, \ptcb \in \{ 1,\ldots , \NOmega \}$, with $\ptca \neq \ptcb$, let $\Gamma_{{\ptca} {\ptcb}}=\partial \Omega^{({\ptca})} \cap \partial \Omega^{({\ptcb})}$ be the interface between the patches $\Omega^{({\ptca})}$ and $\Omega^{({\ptcb})}$.
	\begin{ass}\label{ass:conformity}
		We assume:
		\begin{enumerate}
			\item $\Gamma_{{\ptca}{\ptcb}}$ is either a vertex or the image of a full edge or the image of a full face for both parametric domains.
			\item For each $B^{({\ptca})}_{\mathbf{i}} \in \mathcal{S}_{h}^{({\ptca})}$ such that $\supp(B^{({\ptca})}_{\mathbf{i}}) \cap \Gamma_{{\ptca}{\ptcb}} \neq \emptyset$, there exists a function $B^{({\ptcb})}_{\mathbf{j}} \in \mathcal{S}_{h}^{({\ptcb})}$ such that $B^{({\ptca})}_{\mathbf{i}} |_{ \Gamma_{{\ptca}{\ptcb}}}=B^{({\ptcb})}_{\mathbf{j}} |_{\Gamma_{{\ptca}{\ptcb}}}$.
		\end{enumerate}
	\end{ass}
	We define, for each patch $\Omega^{({\ptca})}$, an application
	\begin{equation*}
	G^{({\ptca})}: \{ 1, \ldots , \Ndof^{(\ptca)} \} \rightarrow \mathcal{J}=\{ 1, \ldots , \dim(V_{h}) \},
	\end{equation*}
	such that $G^{({\ptca})}(i)=G^{({\ptcb})}(j)$ if and only if $\Gamma_{{\ptca}{\ptcb}} \neq \emptyset$ and \mbox{$B^{({\ptca})}_{i} |_{\Gamma_{{\ptca}{\ptcb}}}=B^{({\ptcb})}_{j} |_{\Gamma_{{\ptca}{\ptcb}}}$}. Moreover, we define, for each global index $l \in \mathcal{J}$, a set of pairs $\mathcal{J}_l:= \{ ({\ptca},i): \, G^{({\ptca})}(i)=l \}$, which collects local indices of patch-wise contributions to a global function, and the scalar 
	\begin{equation}\label{def:n_l}
	n_l:= \# \mathcal{J}_l,
	\end{equation}
	that expresses the patch multiplicity for the global index $l$. Furthermore, let
	\begin{equation}\label{def:N_patch}
	\Npatch:=\max \{ n_l : \, l \in \mathcal{J} \}
	\end{equation}
	be the maximum number of adjacent patches (i.e., those with non-empty closure intersection). We define, for each $l \in \mathcal{J}$, the global basis function
	\begin{equation}\label{def:mp_basis}
	B_l(\vett[x]):= \begin{cases}
	B^{({\ptca})}_{i}(\vett[x]) & \text{ if } \vett[x] \in \overline{\Omega ^{({\ptca})}} \text{ and } ({\ptca},i) \in \mathcal{J}_l, \\
	0 & \text{ otherwise},
	\end{cases}
	\end{equation} 
	which is continuous due to Assumption~\ref{ass:conformity}. Then
	\begin{equation}\label{def:V_h}
	V_h= \spann \{ B_l: \, l \in \mathcal{J} \}.
	\end{equation}
	The set $\{ B_l: \, l \in \mathcal{J} \}$
	where $B_l$ is defined as in~\eqref{def:mp_basis}, represents a basis for $V_{h}$. Finally, we define the index set $\mathcal{J}^{(\ptca)} \subset \mathcal{J}$ such that $l \in \mathcal{J}^{(\ptca)}$ if and only if $l=G^{(\ptca)}(i)$ for some~$i$, where $\# \mathcal{J}^{(\ptca)}= \Ndof^{(\ptca)}$ and $\mathcal{J}^{(\ptca)}$ are the index set for $\widehat{\mathcal{S}}_h^{(\ptca)}$ and $\mathcal{S}_h^{(\ptca)}$, abusing notation.
	
	\subsection{Mass preconditioner on a patch} \label{sec:single_patch}
	
	In this section, we briefly revisit the preconditioner described in~\cite{ loli2020}, for an isogeometric mass matrix associated with a single-patch domain, denoted $\Omega$, that is
	\begin{equation}\label{eq:mass_matrix}
	[\M]_{i,j}=\int_{\widehat{\Omega}}  \widehat{B}_{i}  \widehat{B}_{j} |\jac{\vett[F]}|.
	\end{equation}
	\begin{ass}\label{ass:g_regularity}
		We assume $\vett[F] \in C^1([0,1]^d)$ and there exists $\delta>0$ such that for all $\vett[x] \in [0,1]^d$, $\jac{\vett[F]^{(\ptca)}} \geq \delta$.
	\end{ass}
	Hence, as a preconditioner for the mass matrix $\M$, we consider
	\begin{equation}\label{def:single_patch_prec}
	\P:=\Prs,
	\end{equation}
	where
	\begin{align}\label{def:matrices_1}
	[\widehat{\M}]_{i,j}:=\int_{\widehat{\Omega}} \widehat{B}_{i} \widehat{B}_{j}, & & \widehat{\D}:=\text{diag}\left( \widehat{\M} \right),  & & \D:=\text{diag}\left( \M \right).
	\end{align}
	We define
	\begin{align}\label{def:cond}
	\kappa(\mathbf{A}):= \frac{\lambda_{\mathrm{max}}(\mathbf{A})}{\lambda_{\mathrm{min}}(\mathbf{A})},
	\end{align}
	for a symmetric, positive definite matrix $\mathbf{A}$. This preconditioner~\eqref{def:single_patch_prec} has several good properties (see~\cite{ loli2020} for details):
	\begin{itemize}
		\item asymptotic exactness, that is
		\begin{equation}
		\label{eq:limit}
		\lim_{h \rightarrow 0}\kappa(\PMP) =1;
		\end{equation}
		\item $p$-robustness, in fact numerical tests show a slow growth
		(almost linear) of \linebreak$\kappa(\PMP)$ with respect to the spline degree $p$.
		\item good behaviour with respect to the spline parametrization $\vett[F]$; numerical evidence shows that $\kappa(\PMP)$ and the number of PCG iteration needed to converge are small even for distorted geometries.
	\end{itemize}
	
	\subsection{Mass preconditioner on multi-patch domain}\label{sec:multi-patch}
	
	We introduce a mass matrix preconditioner for multi-patch domains, that is
	\begin{align*}
	[M]_{i,j}=\int_{ \Omega} B_iB_j, & & B_i,B_j \in \Bmp,
	\end{align*}
	where $\Omega$ is the union of $\Omega^{(\ptca)}$, see~\eqref{def:patches}. Following~\cite{ loli2020}, we combine the single-patch preconditioner of~\eqref{def:single_patch_prec} with an additive Schwarz method. We define a family of local spaces
	\begin{align}\label{def:V_h_r}
	V_h^{(\ptca)}:= \spann \left\{ \Bmp^{(\ptca)} \right\}, & & \ptca = 1, \ldots , \NOmega,
	\end{align}
	where we have set
	\begin{equation*}
	\Bmp^{(\ptca)}:=\{ B_l :\, l \in \mathcal{J}^{(\ptca)}\},
	\end{equation*}
	with $\mathcal{J}^{(\ptca)}$ defined as in Section~\ref{sec:mp}. Therefore, $V_h^{(\ptca)}$ is the subspace of $V_h$ spanned by the B-splines whose support intersect $\Omega^{(\ptca)}$. Moreover, following the notation of~\cite{ toselli2006domain}, we consider restriction operators $R^{(\ptca)}: V_h \rightarrow V_h^{(\ptca)}$ with $\ptca = 1, \ldots , \NOmega$, defined by
	\begin{align*}
	R^{(\ptca)} \left( \sum_{l \in \mathcal{J}} u_l B_l \right)= \sum_{l \in \mathcal{J}^{(\ptca)}} u_l B_l
	\end{align*}
	and their transposes, in the basis representation, ${{R^{(\ptca)}}^T: V_h^{(\ptca)} \rightarrow V_h}$ correspond, in our case, to the inclusion of $V_h^{(\ptca)}$ into $V_h$. We denote with $\mathbf{R}^{(\ptca)}$ and $\mathbf{R}^{(\ptca)^T}$ the rectangular matrices associated to ${R^{(\ptca)}}$ and ${R^{(\ptca)}}^T$, respectively.
	
	The additive Schwarz preconditioner (inverse) is 
	\begin{equation}\label{def:asp}
	\P^{-1}_{\text{ad}}:= \sum_{{\ptca}=1}^{\NOmega} {\mathbf{R}^{(\ptca)}}^T {\P^{(\ptca)}}^{-1}\mathbf{R}^{(\ptca)},
	\end{equation}
	where we set
	\begin{equation}\label{def:loc_forms2}
	\P^{(\ptca)}:=    \Prsa
	\end{equation}
	and 
	\begin{align}\label{def:loc_matrices}
	[\widehat{\M}^{(\ptca)}]_{i,j}:=\int_{\widehat{\Omega}} \widehat{B}^{(\ptca)}_{i} \widehat{B}^{(\ptca)}_{j}, & & \widehat{\D}^{(\ptca)}:=\text{diag}\left( \widehat{\M}^{(\ptca)} \right), & & 	\D^{(\ptca)}:=\text{diag}\left( \M^{(\ptca)} \right),
	\end{align}
	assumption the basis functions ordering of $\{\widehat{B}^{(\ptca)}_{i}\}_{i=1}^{{\Ndof^{(\ptca)}}}$ and $\{{B}^{(\ptca)}_{i}\}_{i=1}^{\Ndof^{(\ptca)}}$ follows Section~\ref{sec:mp}. 
	\begin{ass}\label{ass:multi-patch}
		For all $\ptca=1, \ldots, \NOmega$, let us assume $\vett[F]^{(\ptca)}$ fulfils Assumption~\ref{ass:g_regularity}.
	\end{ass}
	An upper bound for the conditioning of the preconditioned system~\cite{ loli2020} is
	\begin{align*}
	k\left( \PadMPad \right) \leq C \Npatch^2,
	\end{align*}
	where the constant $C$ is independent of $h$ and $\NOmega$; equation~\eqref{def:N_patch} defines $\Npatch$. Numerical experiments show the method's $p$-robustness and good behavior with respect to the spline parametrizations $\vett[F]^{(\ptca)}$.
	
	\subsection{Preconditioners application cost}\label{sec:application_cost}
	
	The mass matrices and preconditioners of Section~\ref{sec:single_patch} and~\ref{sec:multi-patch} are symmetric and positive definite; thus, we adopt the preconditioned conjugate gradient method (PCG) to solve the associated linear systems. The application of the single-patch preconditioner requires the solution of a linear system associated with
	\begin{align*}
	\P=\Prs.
	\end{align*}
	the Kronecker product structure of the matrix implies that the preconditioner's application cost is proportional to $\Ndof$~\cite{gao2014fast, los2017application, gao2015preconditioners, loli2020}.
	
	For the multi-patch cases, the application of $\Prad$,~\eqref{def:asp}, involves, for $\ptca \in \{ 1, \ldots , \NOmega \}$, the application of the operators $R^{(\ptca)}$ and ${R^{(\ptca)}}^T$, whose cost is negligible, and the application of $\left({\P^{(\ptca)}}\right)^{-1}$, whose cost we analyze above. In conclusion, also for $\Prad$, the cost of application is $O(\Ndof)$ floating-point operations (FLOPS).
	
	\section{Numerical results}\label{sec:6}	
	
	In this section, firstly, we show the performance of the proposed explicit \mbox{generalized-$\alpha$} method. Then, we provide further numerical results on our solver. All the tests are performed with Matlab R2015a and GeoPDEs toolbox~\cite{ Vazquez2016}.  To show how our explicit method and solver work, the linear systems are solved by PCG, with tolerance equal to $10^{-12}$ and with the null vector as the initial guess. We denote by $n_{\mathrm{sub}}$ the number of subdivisions, which are the same in each parametric direction and in each patch, by $p$ the spline degree and by $\tau$ the size of the time grid. Moreover, we underline that we only consider the splines of maximal regularity. The symbol ``*'' denotes the impossibility of the formation of the matrix $\M$, due to memory requirements.
	
	In all the examples, we consider the following hyperbolic problem
	\begin{align}\label{eq:hyp_problem}
	\begin{cases}
	\ddot{u}(\boldsymbol{x},t) - \nabla \cdot(\omega^2\, \nabla u(\boldsymbol{x}, t)) & = f(\boldsymbol{x},t), \qquad \,\, \,(\boldsymbol{x}, t) \in \Omega \times (0, T], \\
	u(\boldsymbol{x}, t) & = u_D, \qquad \qquad \boldsymbol{x} \in \partial \Omega, \\
	u(\boldsymbol{x}, 0) & = u_0, \qquad \,\qquad \boldsymbol{x} \in \Omega, \\
	\dot{u}(\boldsymbol{x},0) & = v_0, \qquad\,\, \qquad \boldsymbol{x} \in \Omega,
	\end{cases}
	\end{align}
	where $\omega$ is assumed to be one except in the problem taken into account in section~\ref{sec:dis}. 
	
	\subsection{Convergence of the generalized-$\alpha$ method}
	
	For verifying the accuracy of the $4^{th}$ order explicit generalized-$\alpha$ method, we solve~\eqref{eq:hyp_problem} on $\Omega=[0,1]\times [0,1]$, choosing the source term, Dirichlet boundary condition and initial condition such that the exact solution reads
	\begin{align*}
	u(x,y,t)=\sin (10\pi x) \sin (10\pi x)\left[ \cos\left(10\sqrt{2} \pi t\right) + \sin\left(10\sqrt{2} \pi t\right) \right].
	\end{align*}
	In Figure~\ref{fig:l2_err_tau}, we show the $4^{th}$ order of convergence in the time domain for the unknown displacement and velocity at final time $T=0.1$. In this example, we use $100\times 100$ isogeometric elements with polynomial degree of $p=7$ with continuity $C^6$. We also set all the user-defined parameters $\rho_{bi}=\rho_{si}=\rho$. As the analysis showed before, the obtained solutions $u^n_h$ and $v^n_h$ which respectively approximate $u(x,t_n)$ and $\dot{u}(x,t_n)$ converge with an order of four.
	
	\begin{figure}
		\includegraphics[width=0.49\textwidth]{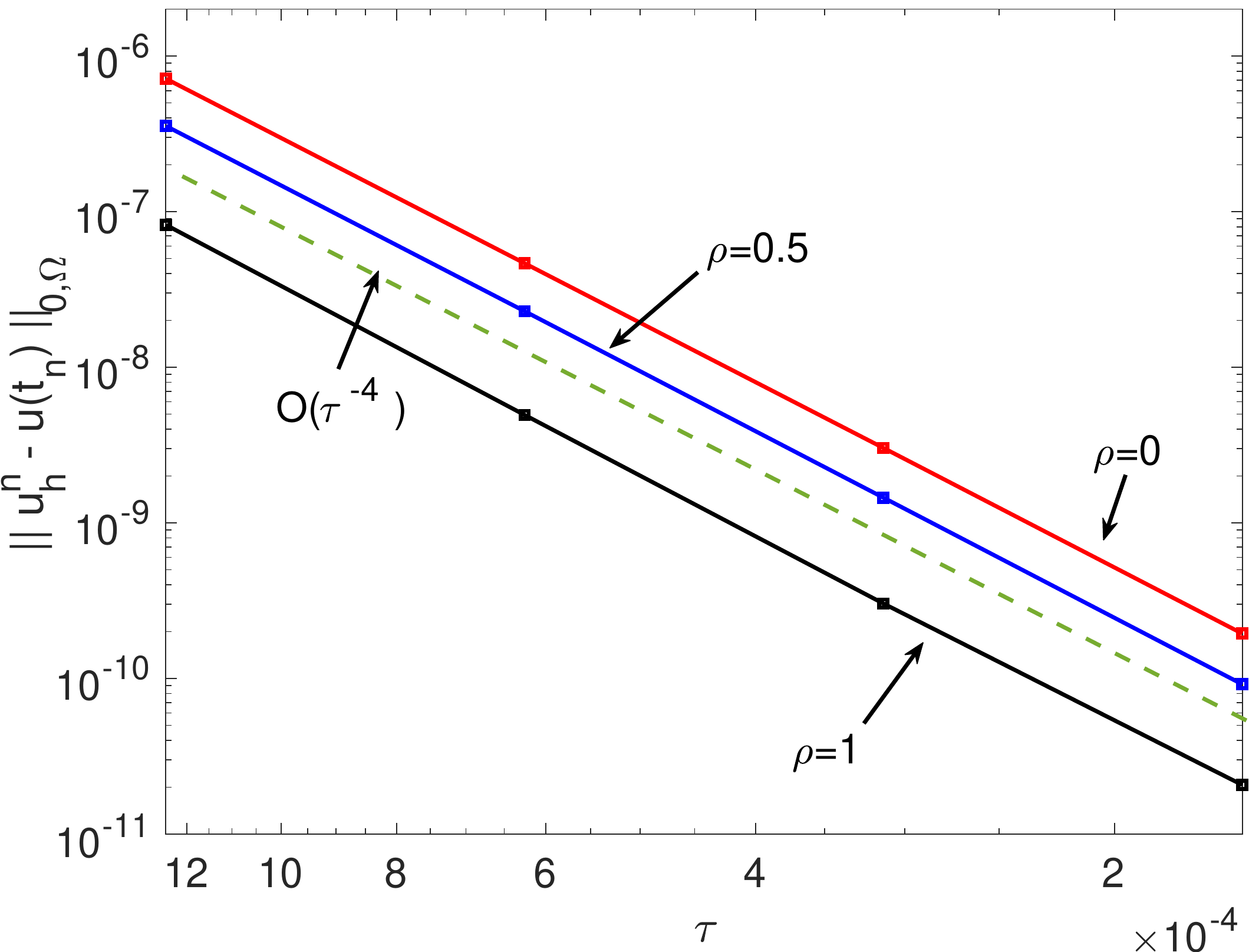}
		\includegraphics[width=0.49\textwidth]{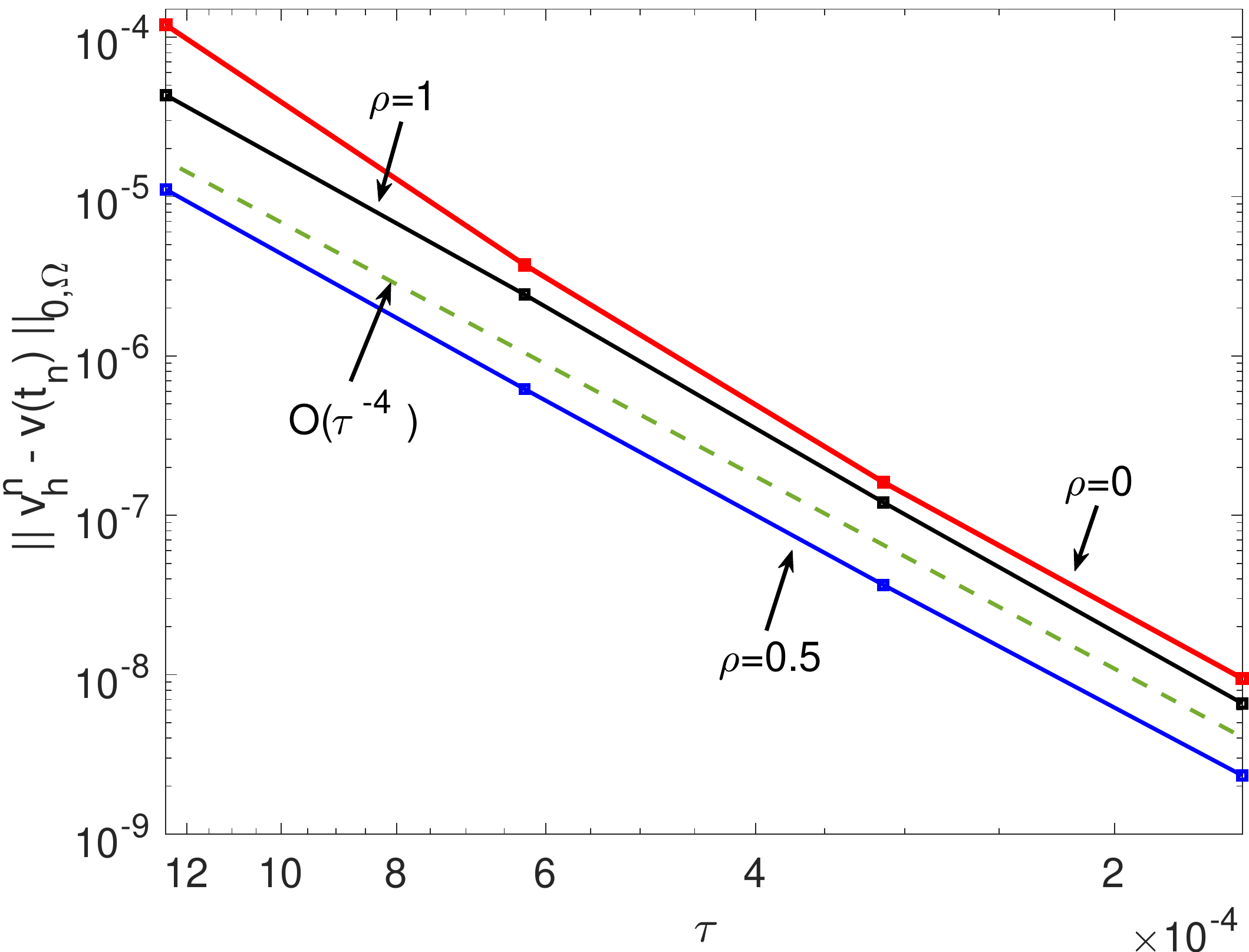}
		\caption{$L^2(\Omega)$ norm error at $T=0.1$ for different values of user-defined parameters.}
		\label{fig:l2_err_tau} 
	\end{figure}
	
	\subsection{Dispersion behaviour of the generalized-$\alpha$ method}\label{sec:dis}
	
	In this example, we numerically show that choosing higher-order methods leads to better approximations in low-frequency zones. Again, we consider a homogeneous Dirichlet boundary condition with an exact solution given by
	\begin{equation}
	u(x,t)=\sin(j\pi x)\cos(\pi t),
	\end{equation}
	and setting $\omega=\frac{1}{j}$. We discretize the spatial domain using $N=400$ elements with polynomials of degree $p=4$ and regularity $C^3$. We refer to Figure~\ref{fig:disp}, where we consider two cases; one is with the time step $\tau=0.05$ in which the bifurcation region is reached and, therefore, we obtain distinguishable results by setting different values for $\rho$. Additionally, it shows the importance of picking $\rho$ wisely. In the other example, we set the time step $\tau=10^{-3}$. Here, the spectral behaviour of the system does not change by varying $j$ and results in similar solutions for different values of $\rho$. 
	
	\begin{figure}
		\includegraphics[width=\textwidth]{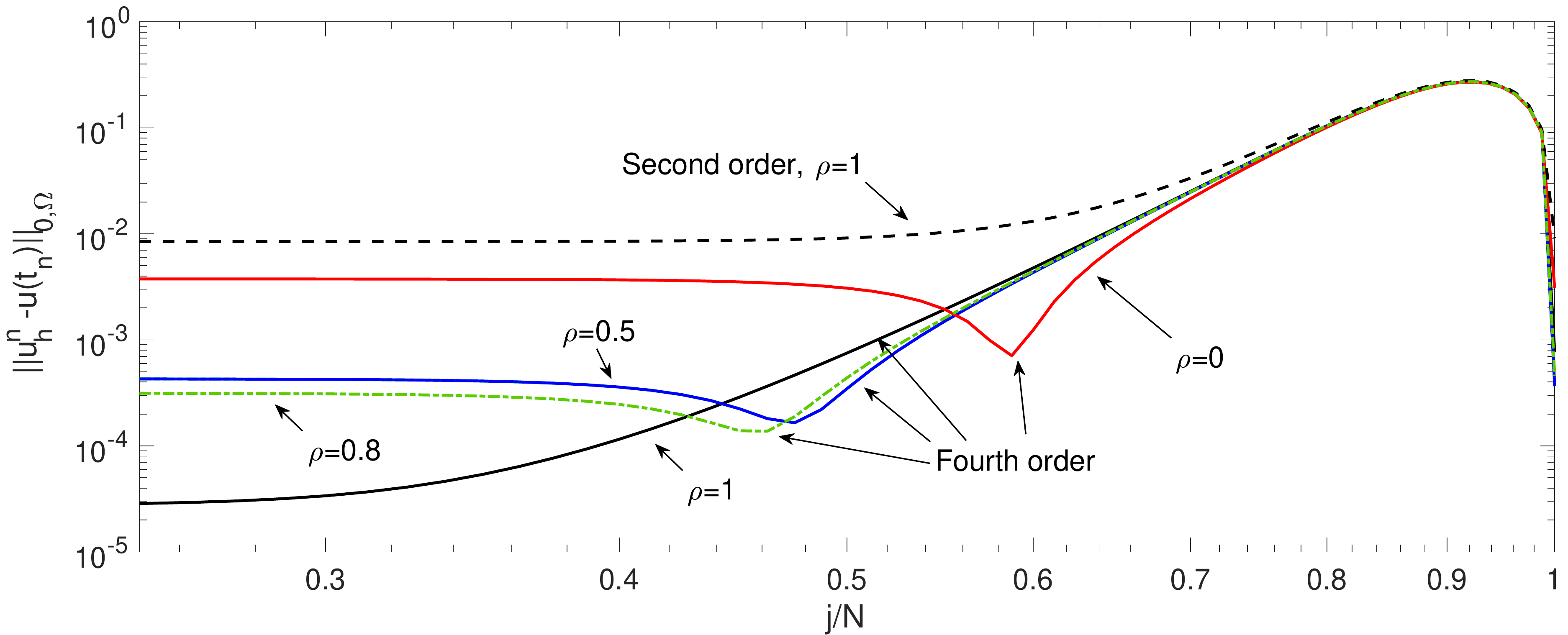}
		\includegraphics[width=\textwidth]{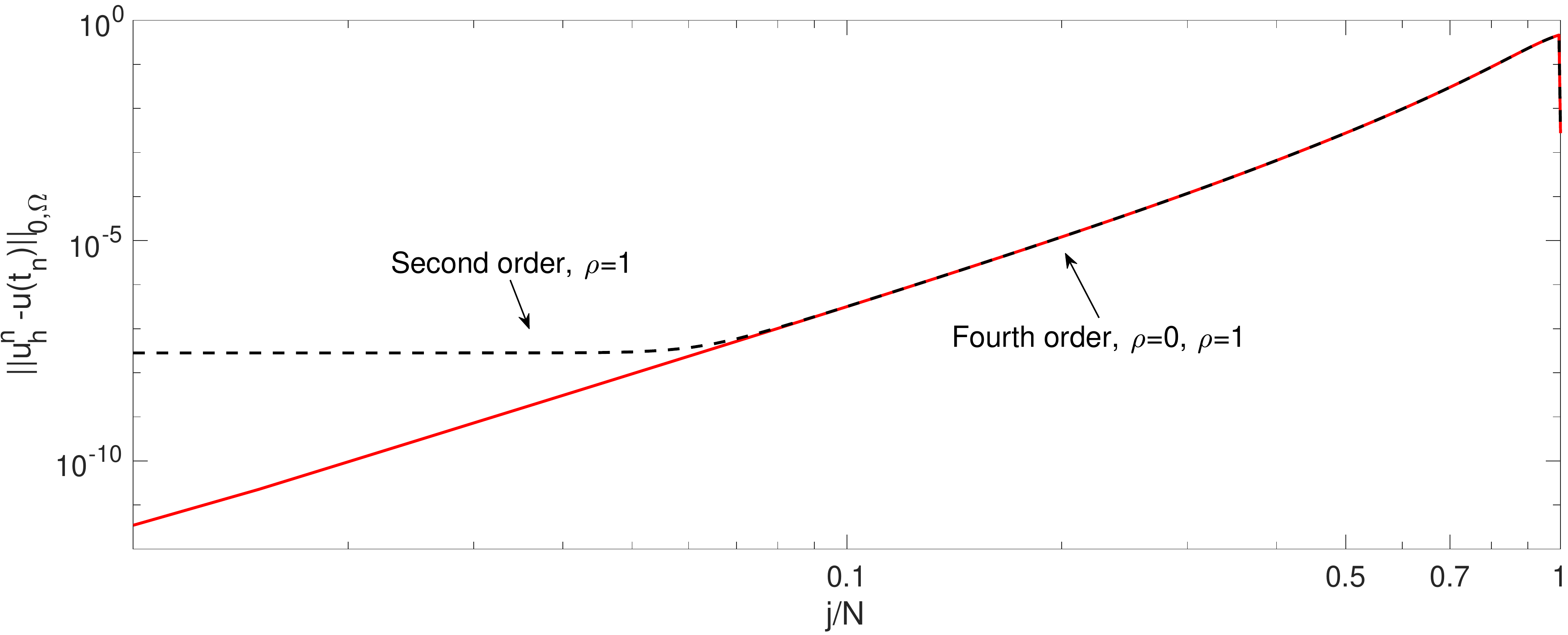}
		\caption{Comparison between the dispersion behaviour of the second and fourth order methods, $\tau=0.05$ (top) and $\tau=10^{-3}$ (bottom) for final time $t_n=5$.}
		\label{fig:disp}
	\end{figure}
	
	\subsection{Preconditioner Performance}
	
	In order to analyze the behaviour of the proposed preconditioners and the proposed fourth-order method, we solve~\eqref{eq:hyp_problem} on a regular single-patch domain (Figure~\ref{fig:blade}), a singular one (Figure~\ref{fig:donut}) and a multi-patch domain (Figure~\ref{fig:fan}), obtained by glueing together seven blade-shaped patches like the one represented in Figure~\ref{fig:blade}. For each of them, source term, Dirichlet boundary condition and initial condition are chosen such that the exact solution is always
	\begin{align*}
	u(x_1,x_2,x_3,t)=\sin (x_1) \sin (x_2) \sin (x_3) \left[ \cos(20 \pi t) + \sin(20 \pi t) \right].
	\end{align*}
	
	\begin{figure}
		\begin{subfigure}{.55\textwidth}
			\includegraphics[width=\textwidth]{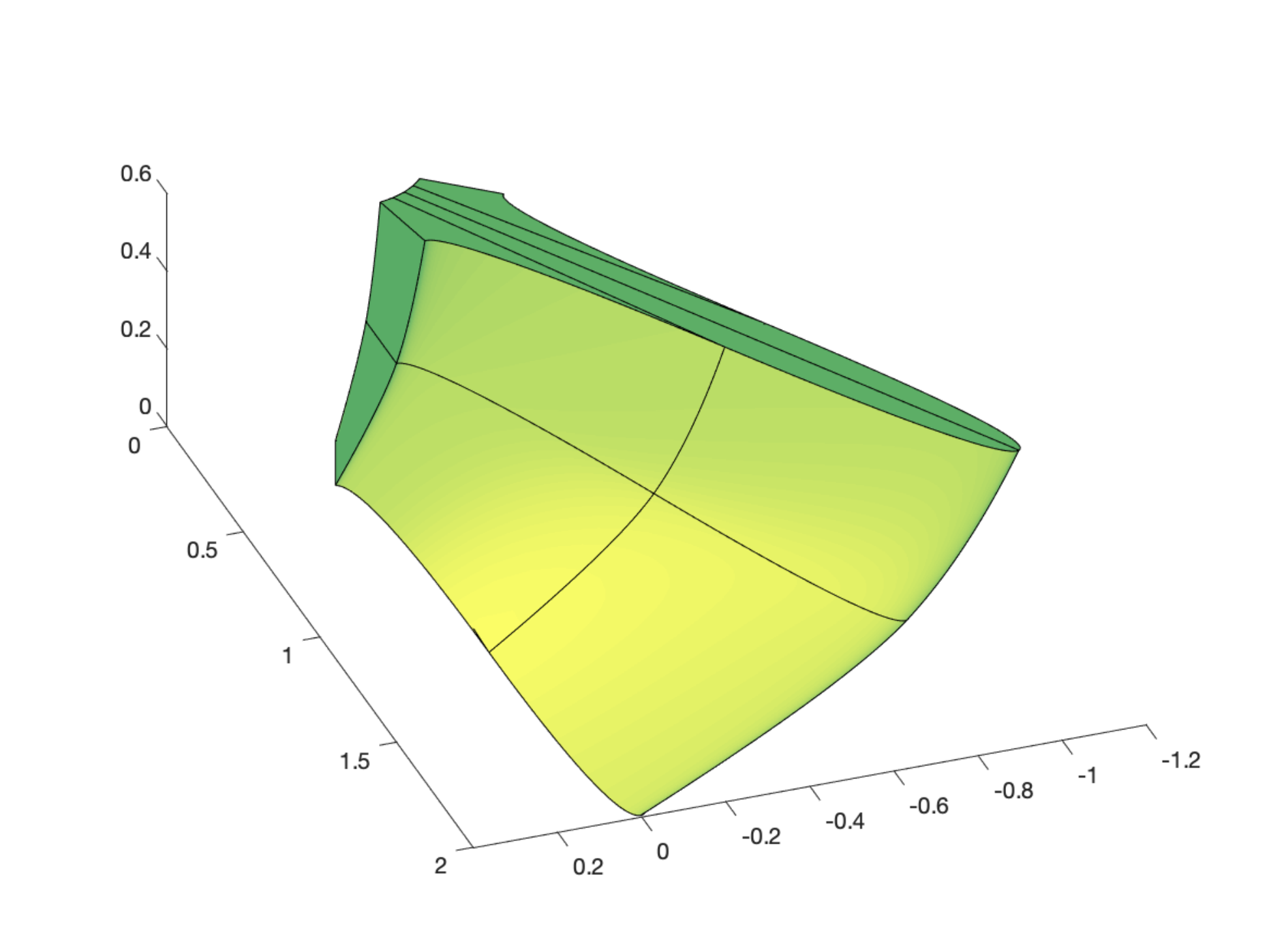}
			\caption{single-patch domain (blade geometry)}
			\label{fig:blade}
		\end{subfigure}
		\begin{subfigure}{.5\textwidth}
			\includegraphics[width=\textwidth]{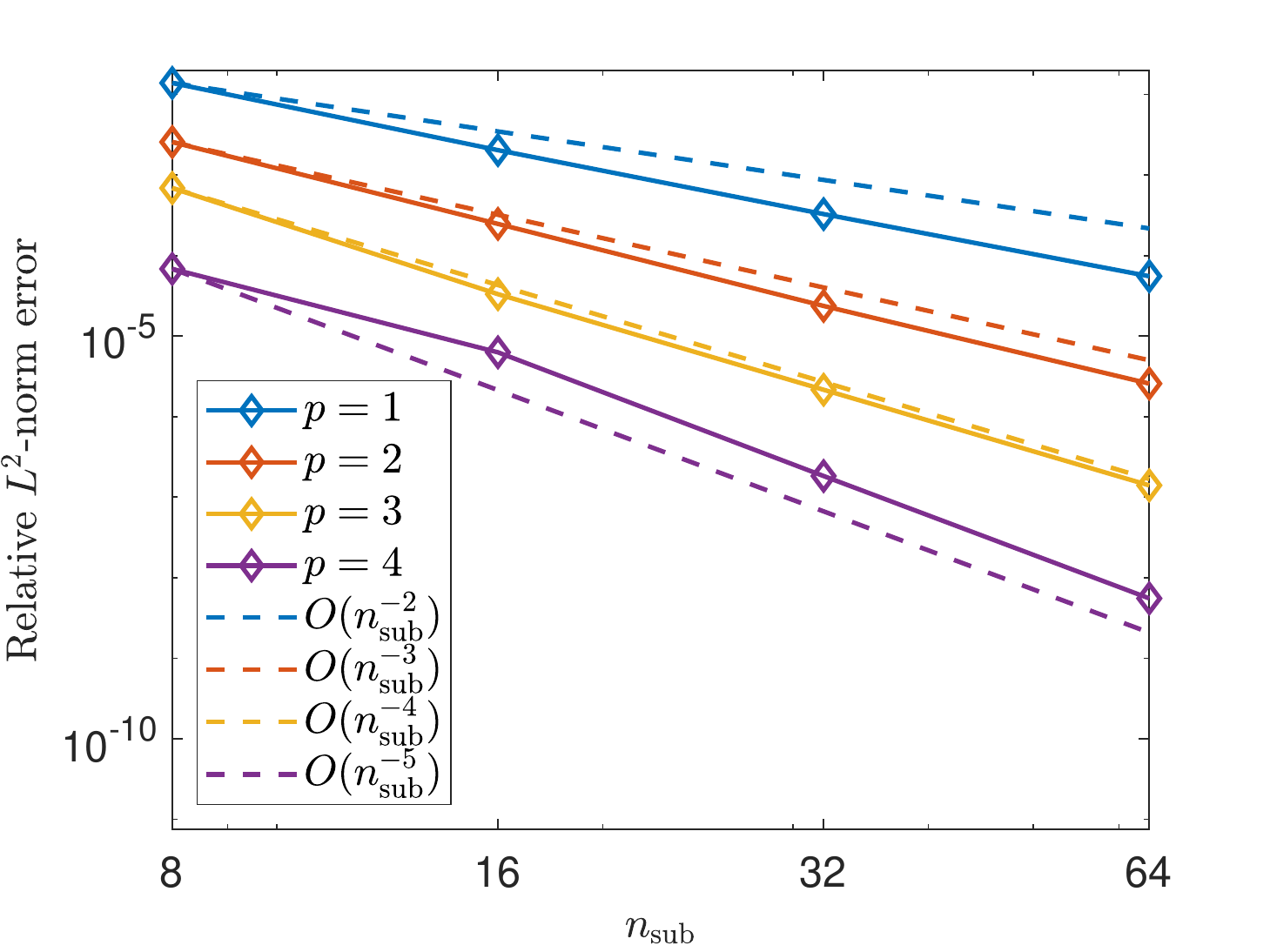}
			\caption{Optimal convergence of $L_2$ error}
			\label{fig:bladeerr}
		\end{subfigure}
		\caption{$L^2(\Omega)$ norm relative error at $T=64 \cdot \tau$ with $\tau=10^{-5}$ for the blade geometry.}
	\end{figure}
	\begin{figure}
		\begin{subfigure}{.55\textwidth}
			\includegraphics[width=\textwidth]{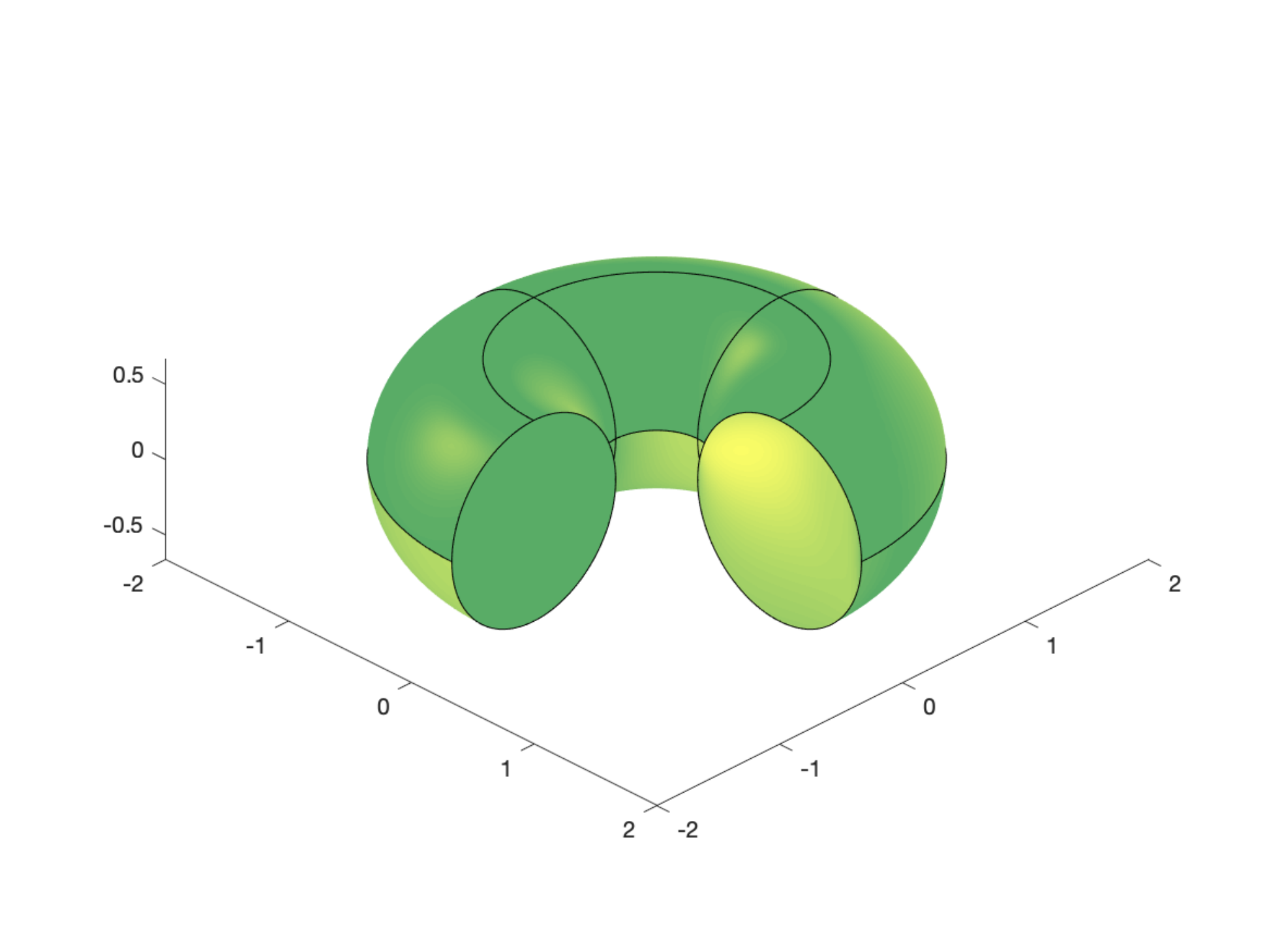}
			\caption{Singular domain (donut geometry)}
			\label{fig:donut}
		\end{subfigure}
		\begin{subfigure}{.5\textwidth}
			\includegraphics[width=\textwidth]{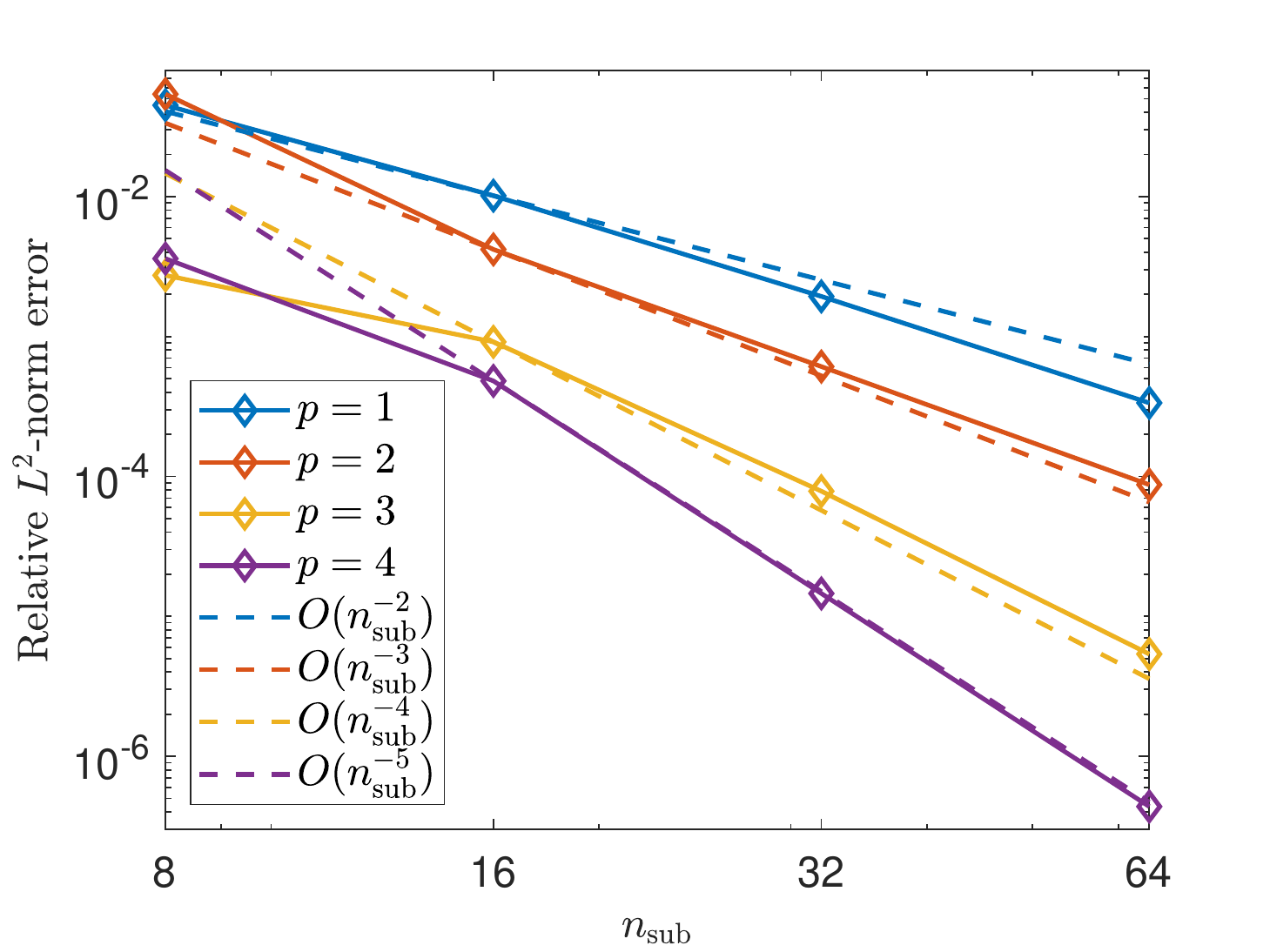}
			\caption{Optimal convergence of $L_2$ error}
			\label{fig:donuterr}
		\end{subfigure}
		\caption{$L^2(\Omega)$ norm relative error at $T=64 \cdot \tau$ with $\tau=10^{-5}$ for the donut geometry.}
	\end{figure}
	
	\begin{figure}
		\begin{subfigure}{.55\textwidth}
			\includegraphics[width=\textwidth]{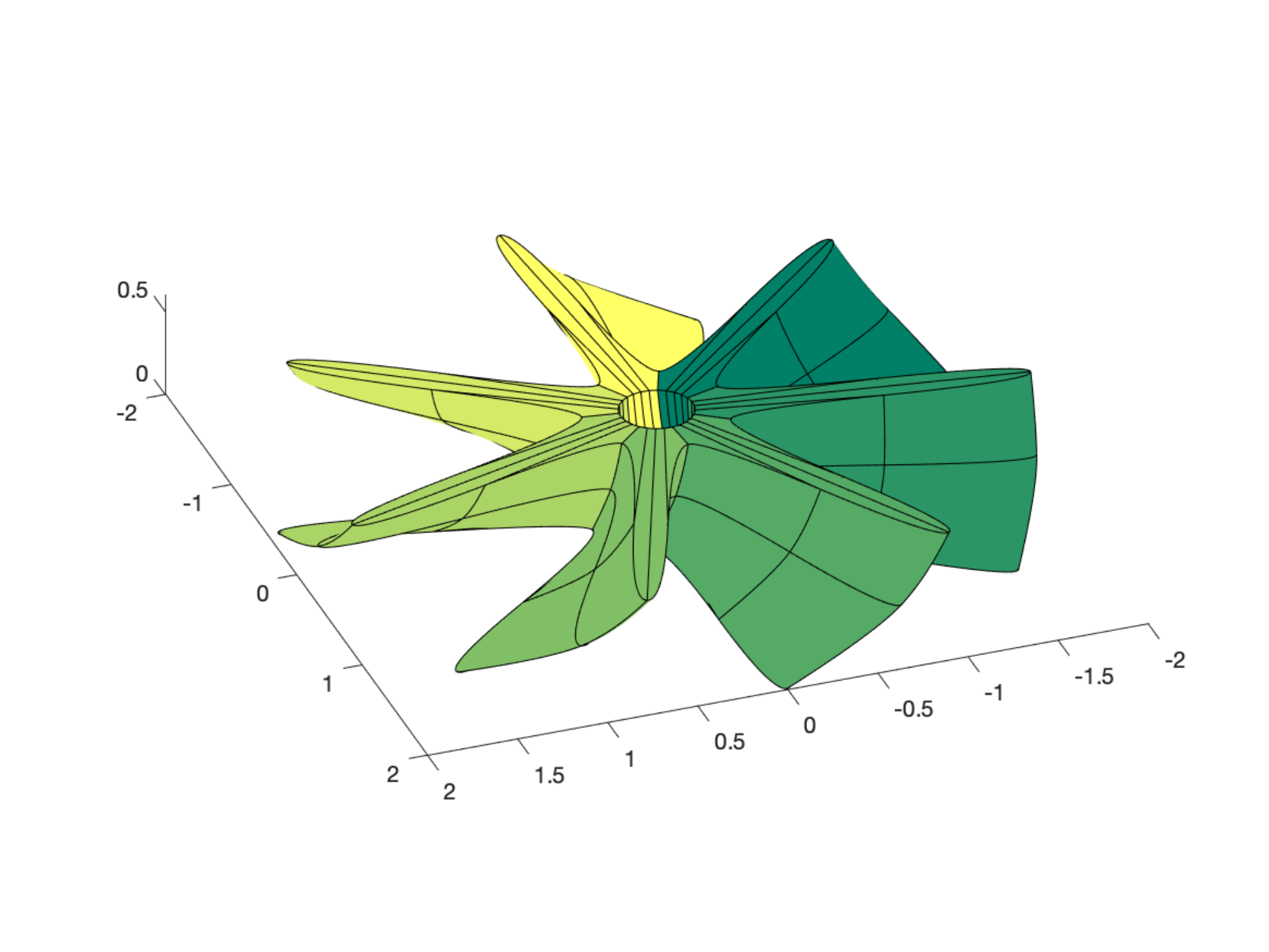}
			\caption{multi-patch domain (fan geometry)}
			\label{fig:fan}
		\end{subfigure}
		\begin{subfigure}{.5\textwidth}
			\includegraphics[width=\textwidth]{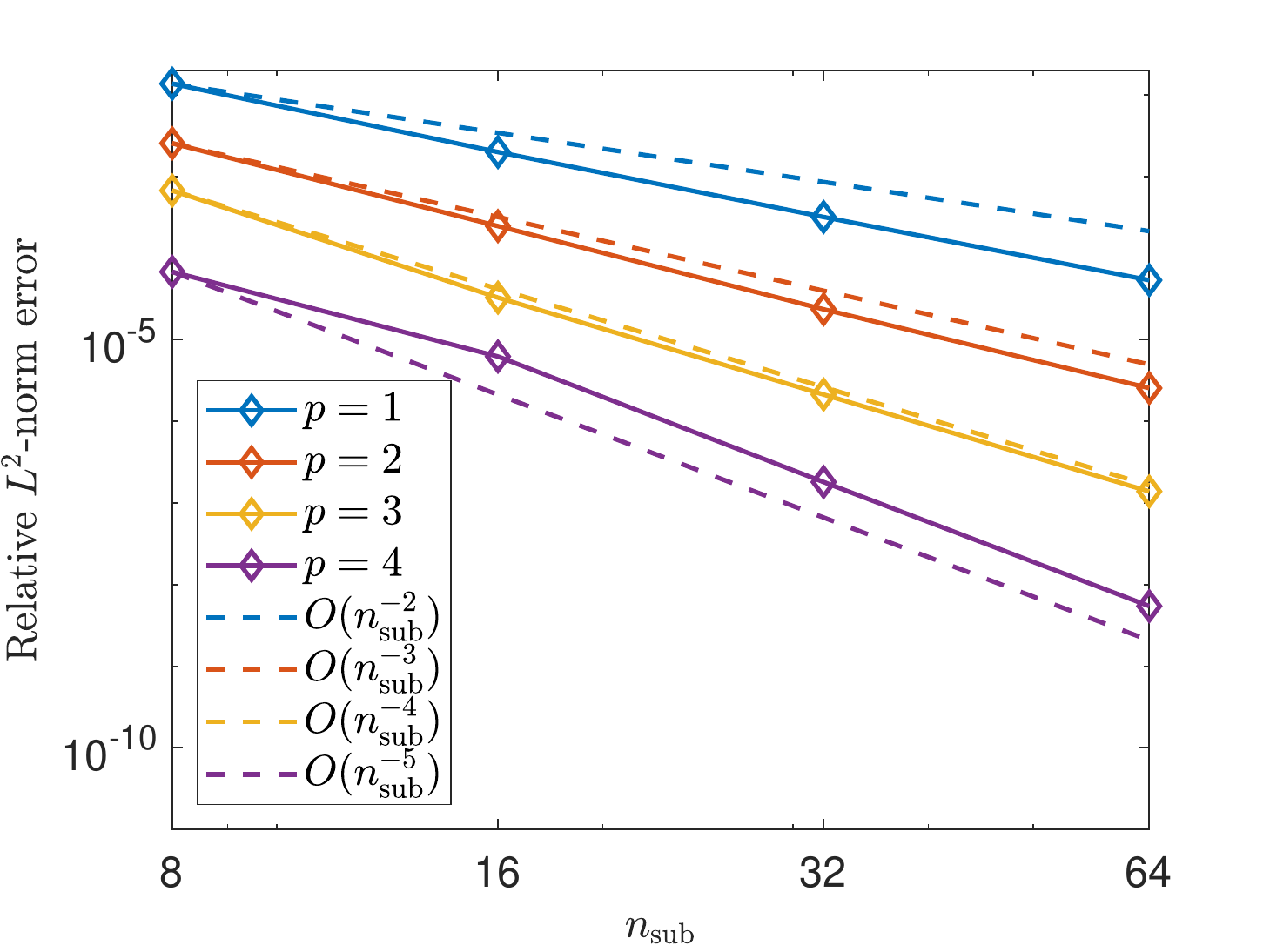}
			\caption{Optimal convergence of $L_2$ error}
			\label{fig:fanerr}
		\end{subfigure}
		\caption{$L^2(\Omega)$ norm relative error at $T=64 \cdot \tau$ with $\tau=10^{-5}$ for the fan geometry.}
	\end{figure}
	
	We report the mean value, across all the time steps, of the number of iterations needed by PCG for reaching the given tolerance for the three different spatial domains. Tables~\ref{table:Blade_its} and~\ref{table:Fan_its} shows that the number of PCG iterations is always very low and decreases when the subdivisions are increased. This is true also for the singularly parametrized Donut domain (see Table~\ref{table:Donut_its}), even though this case is beyond the robustness result presented in Section~\ref{sec:single_patch}.
	\begin{table}[]                                 
		\centering   
		\begin{tabular}{|c|c|c|c|c|c|}  
			\hline          
			$n_{\mathrm{sub}}$ & $p=1$ & $p=2$ & $p=3$ & $p=4$  \\ 
			\hline                            
			8 & 7.5 & 10.1 & 10.2 & 10.4 \\
			\hline                                                           
			16 & 6.1 & 8.4 & 8.4 & 8.9 \\
			\hline                                                           
			32 & 5.5 & 6.5 & 6.5 & 7.2 \\
			\hline                                                           
			64 & 5.0 & 5.5 & 5.5 & 6.0 \\
			\hline                                                           
		\end{tabular}
		\caption{Mean values, across all the time steps, of the iterations needed by PCG on the blade, for $\tau=10^{-5}$ and $T=64 \cdot \tau$.} 
		\label{table:Blade_its} 
	\end{table}
	
	\begin{table}[]                                 
		\centering   
		\begin{tabular}{|c|c|c|c|c|c|}  
			\hline          
			$n_{\mathrm{sub}}$ & $p=1$ & $p=2$ & $p=3$ & $p=4$  \\ 
			\hline                            
			8 & 6.0 & 8.0 & 9.0 & 10.0 \\
			\hline                                                           
			16 & 5.0 & 7.0 & 8.0 & 8.0 \\
			\hline                                                           
			32 & 5.0 & 6.0 & 7.0 & 8.0 \\
			\hline                                                           
			64 & 5.3 & 6.5 & 7.0 & 7.8 \\
			\hline                                                           
		\end{tabular} 
		\caption{Mean values, across all the time steps, of the iterations needed by PCG on the donut, for $\tau=10^{-5}$ and $T=64 \cdot \tau$.} 
		\label{table:Donut_its} 
	\end{table}   
	
	\begin{table}[]                                 
		\centering   
		\begin{tabular}{|c|c|c|c|c|c|}  
			\hline          
			$n_{\mathrm{sub}}$ & $p=1$ & $p=2$ & $p=3$ & $p=4$  \\ 
			\hline                            
			8 & 22.5 & 32.0 & 37.7 & 41.9 \\
			\hline                                                           
			16 & 21.0 & 27.8 & 30.8 & 35.6 \\
			\hline                                                           
			32 & 19.0 & 25.5 & 27.9 & 29.9 \\
			\hline                                                          
			64 & 17.0 & 22.5 & 26.0 & 27.5 \\
			\hline                                                           
		\end{tabular} 
		\caption{Mean values, across all the time steps, of the iterations needed by PCG on the fan problem, for $\tau=5 \cdot 10^{-5}$ and $T=64 \cdot \tau$.} 
		\label{table:Fan_its} 
	\end{table}   
	
	Furthermore, for assessing the good behaviour of the isogeometric discretization, for each spatial domain, we report the relative error in $L^2(\Omega)$ at the final instant $T=6.4 \cdot 10^{-4}$ with $\tau=10^{-5}$ for different mesh size and spline degree. In Figure~\ref{fig:bladeerr}, we can see that the rates of convergence are optimal with respect to the mesh size $h\approx n_{\mathrm{sub}}^{-1}$, i.e., of order $O(h^{p+1})$, for $p=1,2,3,4$, as expected from standard a priori error estimate. This optimal convergence in the spatial domain is also true for singular and multi-patch domains. For this, we refer the reader to Figures~\ref{fig:donuterr} and~\ref{fig:fanerr}.  
	
	\section{Contributions}\label{sec:7}
	
	We propose a new class of higher-order explicit generalized-$\alpha$ methods for solving hyperbolic problems that provide dissipation control. Additionally, the method's stability region is independent of the accuracy in time. We obtain $2k^{th}$-order of accuracy by solving $k$ mass-matrix systems. We adopt a preconditioner designed and built for the isogeometric mass matrix, which significantly reduces the computational costs. We discuss several numerical examples that show the stability and performance of our one-parameter family of explicit time-marching methods.

	\section*{Acknowledgement}
	This publication was also made possible in part by the CSIRO Professorial Chair in Computational Geoscience at Curtin University and the Deep Earth Imaging Enterprise Future Science Platforms of the Commonwealth Scientific Industrial Research Organisation, CSIRO, of Australia. This project has received funding from the European Union's Horizon 2020 research and innovation programme under the Marie Sklodowska-Curie grant agreement No 777778 (MATHROCKS). The Curtin Corrosion Centre and the Curtin Institute for Computation kindly provide ongoing support. The Australian Government Research Training Program Scholarship supported P. Behnoudfar's research.  Part of this work was carried over while P. Behnoudfar was invited by Prof. A. Reali in Pavia, partially supported by MIUR-PRIN project XFAST-SIMS (no. 20173C478N). G. Loli and G. Sangalli were partially supported by the European Research Council through the FP7 Ideas Consolidator Grant HIGEOM n.616563, and by the Italian Ministry of Education, University and Research (MIUR) through the "Dipartimenti di Eccellenza Program (2018-2022) - Dept. of Mathematics, University of Pavia". They are also members of the Gruppo Nazionale Calcolo Scientifico - Istituto Nazionale di Alta Matematica (GNCS-INDAM). These supports are gratefully acknowledged.

	\bibliography{ref}

\begin{thebibliography}{10}

\bibitem{chung1993time}
J.~Chung and G.~M. Hulbert.
\newblock A time integration algorithm for structural dynamics with improved
  numerical dissipation: the generalized-$\alpha$ method.
\newblock {\em Journal of Applied Mechanics}, 60(2):371--375, 1993.

\bibitem{HULBERT1996175}
G.~M. Hulbert and J.~Chung.
\newblock Explicit time integration algorithms for structural dynamics with
  optimal numerical dissipation.
\newblock {\em Computer Methods in Applied Mechanics and Engineering},
  137(2):175 -- 188, 1996.

\bibitem{newmark1959method}
N.~M. Newmark.
\newblock A method of computation for structural dynamics.
\newblock {\em Journal of the engineering mechanics division}, 85(3):67--94,
  1959.

\bibitem{hilber1977improved}
H.~M. Hilber, T.~J.~R. Hughes, and R.~L. Taylor.
\newblock Improved numerical dissipation for time integration algorithms in
  structural dynamics.
\newblock {\em Earthquake Engineering \& Structural Dynamics}, 5(3):283--292,
  1977.

\bibitem{butcher2016numerical}
J.~C. Butcher.
\newblock {\em Numerical methods for ordinary differential equations}.
\newblock John Wiley \& Sons, 2016.

\bibitem{evans2018explicit}
J.~A. Evans, R.~R. Hiemstra, T.~J.~R. Hughes, and A.~Reali.
\newblock Explicit higher-order accurate isogeometric collocation methods for
  structural dynamics.
\newblock {\em Computer Methods in Applied Mechanics and Engineering},
  338:208--240, 2018.

\bibitem{loli2020}
G.~Loli, G.~Sangalli, and M.~Tani.
\newblock Easy and efficient preconditioning of the isogeometric mass matrix.
\newblock {\em Computers \& Mathematics with Applications}, 2021.

\bibitem{hughes2014finite}
T.~J.~R. Hughes, J.~A. Evans, and A.~Reali.
\newblock Finite element and nurbs approximations of eigenvalue,
  boundary-value, and initial-value problems.
\newblock {\em Computer Methods in Applied Mechanics and Engineering},
  272:290--320, 2014.

\bibitem{hughes2012finite}
T.~J.~R. Hughes.
\newblock {\em The finite element method: linear static and dynamic finite
  element analysis}.
\newblock Courier Corporation, 2012.

\bibitem{deng2019high}
P.~Behnoudfar, Q.~Deng, and V.~M. Calo.
\newblock High-order generalized-alpha method.
\newblock {\em Applications in Engineering Science}, 4:100021, 2020.

\bibitem{behnoudfar2019higher}
P.~Behnoudfar, Q.~Deng, and V.~M. Calo.
\newblock Higher-order generalized-$\alpha $ methods for hyperbolic problems.
\newblock {\em arXiv preprint arXiv:1906.06081}, 2019.

\bibitem{hairer2010solving}
E.~Hairer and G.~Wanner.
\newblock {\em Solving Ordinary Differential Equations II: Stiff and
  Differential-Algebraic Problems}, volume~14.
\newblock Springer, 2010.

\bibitem{behnoudfar2021split}
P.~Behnoudfar, Q.~Deng, and V.~M. Calo.
\newblock Split generalized-$\alpha$ method: A linear-cost solver for
  multi-dimensional second-order hyperbolic systems.
\newblock {\em Computer Methods in Applied Mechanics and Engineering},
  376:113656, 2021.

\bibitem{behnoudfar2018variationally}
P.~Behnoudfar, V.~M. Calo, Q.~Deng, and P.~D. Minev.
\newblock A variationally separable splitting for the generalized-$\alpha$
  method for parabolic equations.
\newblock {\em International Journal for Numerical Methods in Engineering},
  121(5):828--841, 2020.

\bibitem{Cottrell2009}
J.~A. Cottrell, T.~J.~R. Hughes, and Y.~Bazilevs.
\newblock {\em Isogeometric analysis: toward integration of {CAD} and {FEA}}.
\newblock John Wiley \& Sons, Chichester, 2009.

\bibitem{DeBoor2001}
C.~De~Boor.
\newblock {\em A practical guide to splines, {R}evised {E}dition}, volume~27 of
  {\em Applied Mathematical Sciences}.
\newblock Springer-Verlag, New York, 2001.

\bibitem{daveiga_buffa_sangalli_2014}
L.~Beir{\~a}o~da Veiga, A.~Buffa, G.~Sangalli, and R.~H. V\'azquez.
\newblock Mathematical analysis of variational isogeometric methods.
\newblock {\em Acta Numer.}, 23:157–--287, 2014.

\bibitem{toselli2006domain}
A.~Toselli and O.~Widlund.
\newblock {\em Domain decomposition methods---algorithms and theory}, volume~34
  of {\em Springer Series in Computational Mathematics}.
\newblock Springer-Verlag, Berlin, 2005.

\bibitem{gao2014fast}
L.~Gao and V.~M. Calo.
\newblock Fast isogeometric solvers for explicit dynamics.
\newblock {\em Computer Methods in Applied Mechanics and Engineering},
  274:19--41, 2014.

\bibitem{los2017application}
M.~{\L}o{\'s}, M.~Paszy{\'n}ski, A.~K{\l}usek, and W.~Dzwinel.
\newblock Application of fast isogeometric l2 projection solver for tumor
  growth simulations.
\newblock {\em Computer Methods in Applied Mechanics and Engineering},
  316:1257--1269, 2017.

\bibitem{gao2015preconditioners}
L.~Gao and V.~M. Calo.
\newblock Preconditioners based on the alternating-direction-implicit algorithm
  for the 2d steady-state diffusion equation with orthotropic heterogeneous
  coefficients.
\newblock {\em Journal of Computational and Applied Mathematics}, 273:274--295,
  2015.

\bibitem{Vazquez2016}
R.~H. V{\'a}zquez.
\newblock A new design for the implementation of isogeometric analysis in
  {O}ctave and {M}atlab: {G}eo{PDE}s 3.0.
\newblock {\em Comput. Math. Appl.}, 72(3):523--554, 2016.

\end{thebibliography}
\end{document}